\newcommand*\diff{\mathop{}\!\mathrm{d}} 
\newcommand{\pc}[1]{
  \begingroup\lccode`~=`: \lowercase{\endgroup
  \edef~}{\mathbin{\mathchar\the\mathcode`:}\nobreak}%
  (%
  \begingroup
  \mathcode`:=\string"8000
  #1%
  \endgroup
  )%
}
\newcommand{\cL}{\mathcal{L}}
\newcommand{\cO}{\mathcal{O}}
\newcommand{\mL}{\widetilde{\mathcal{L}}}
\newcommand{\mT}{\mathcal{T}}
\newcommand{\mU}{\mathcal{U}}
\newcommand{\mV}{\mathcal{V}}
\newcommand{\mX}{\mathcal{X}}
\newcommand{\bA}{\mathbb{A}}
\newcommand{\FF}{\mathbb{F}}
\newcommand{\GG}{\mathbb{G}}
\newcommand{\PP}{\mathbb{P}}
\newcommand{\QQ}{\mathbb{Q}}
\newcommand{\QQbar}{{\overline{\mathbb{Q}}}}
\newcommand{\RR}{\mathbb{R}}
\newcommand{\ZZ}{\mathbb{Z}}
\newcommand{\ZZnz}{\mathbb{Z}_{\neq{}0}}
\newcommand{\addgrp}{\mathbb{G}_\mathrm{a}}
\newcommand{\fo}{\mathfrak{o}}
\newcommand{\charfun}{\mathcal{X}}
\DeclareMathOperator{\Cl}{Cl}
\DeclareMathOperator{\Cone}{Cone}
\DeclareMathOperator{\Eff}{Eff}
\DeclareMathOperator{\EP}{EP}
\DeclareMathOperator{\Frac}{Frac}
\DeclareMathOperator{\Gal}{Gal}
\DeclareMathOperator{\Pic}{Pic}
\DeclareMathOperator{\Proj}{Proj}
\DeclareMathOperator{\rk}{rk}
\DeclareMathOperator{\SAmple}{SAmple}
\DeclareMathOperator{\Spec}{Spec}
\DeclareMathOperator{\V}{V}
\newcommand{\abs}[1]{\left\lvert#1\right\rvert}
\newcommand{\norm}[1]{\left\lVert#1\right\rVert}
\newcommand{\relmiddle}[1]{\mathrel{}\middle#1\mathrel{}}
\newcommand{\fppf}{\mathrm{fppf}}
\newcommand{\irr}{\mathrm{irr}}
\newcommand{\imult}{\mathrm{m}}
\newtheorem{theorem}{Theorem}[section]
\newtheorem{lemma}[theorem]{Lemma}
\newtheorem{proposition}[theorem]{Proposition}
\theoremstyle{definition}
\newtheorem{remark}[theorem]{Remark}
\begin{document}

\title[Integral Points on a log Fano Threefold]{Integral Points of Bounded Height\\ on a log Fano Threefold}

\author{Florian Wilsch}

\address{Institute of Science and Technology Austria,
Am Campus 1, 3400 Klosterneuburg, Austria}

\email{florian.wilsch@ist.ac.at}

\date{November 18, 2021}

\begin{abstract}
We determine an asymptotic formula for the number of integral points of bounded
height on a blow-up of $\PP^3$ outside certain planes using universal torsors.
\end{abstract}

\maketitle

\section{Introduction}
Manin's conjecture~\cite{MR974910,MR1032922} is concerned with the number of rational points on Fano varieties $X$ (that is, smooth, projective varieties with ample anticanonical bundle $\omega_X^\vee$) over a number field $K$ with Zariski dense $K$-rational points.
We may associate height functions $H\colon X(K)\to \RR_{>0}$ with the anticanonical bundle. Manin's conjecture gives a prediction for the number of rational points of bounded anticanonical height that lie in the complement $V$ of all \emph{accumulating} subvarieties, whose rational points would dominate the total number. More precisely, it predicts that the number of rational points of bounded height
\[
  \#\{x\in V(K)\mid H(x) \leq B\}
\]
grows asymptotically as $c B(\log B)^{r-1}$, where $r$ is the Picard number of $X$.

Peyre~\cite{MR1340296,MR2019019} gave a conjectural interpretation of the constant $c$ as a product $\alpha\beta\tau$, where $\alpha$ depends on the geometry of the effective cone, $\beta$ is a cohomological constant connected to the Brauer group and $\tau$ is an adelic volume that can be interpreted as a product of local densities.
Such asymptotics are in particular known for generalized flag varieties~\cite{MR974910}, toric varieties~\cite{MR1620682}, equivariant compactifications of vector groups~\cite{MR1906155}, and some smooth del Pezzo surfaces~\cite{MR1909606,MR2099200,MR2838351}.

Fano threefolds were classified by Iskovskih, Mori and Mukai~\cite{MR463151,MR641971}. For these, Manin proved a lower bound for the number of rational points after a finite extension of the base field~\cite{MR1199203}. Those Fano threefolds that are toric or additive and for which Manin's conjecture is thus known have been classified by Batyrev~\cite{MR631434} and Huang--Montero~\cite{MR4104377}, respectively. Besides such results for general classes of varieties, Manin's conjecture for Fano threefolds remains open.

\smallskip

On proper varieties, integral points on an integral model and rational points coincide as a consequence of the valuative criterion for properness. A set-up concerning integral points on a non-proper variety analogous to Manin's conjecture is the following: Consider a smooth log Fano variety over a number field $K$, by which we shall mean a smooth, projective variety $X$ together with a reduced, effective divisor $D$ with strict normal crossings over an algebraic closure such that the log-anticanonical bundle $\omega_X(D)^\vee$ is ample. Let $H$ be a log-anticanonical height function, let $\mU$ be a flat integral model of $X-D$ and consider the complement $V\subset X$ of all subvarieties whose points would dominate the number of integral points on $\mU$. How does the number of integral points of bounded height
\[
  \#\{x\in \mU(\fo_K) \cap V(K) \mid H(x) \leq B\}
\]
behave asymptotically?

Results in this direction include complete intersections of large dimension compared to their degree~\cite{MR0150129},
algebraic groups and homogeneous spaces~\cite{MR1230289,MR1381987,MR1230290,MR1309971,MR2286635,MR2488484,MR3438314},
and partial equivariant compactifications~\cite{arXiv:1006.3345,MR2999313,MR3117310}, that is, equivariant compactifications $X$ together with an invariant divisor $D$. The first case is an application of the circle method; for the latter cases, the group structure is exploited by means of harmonic analysis or similar methods.

In~\cite{MR2740045}, Chambert-Loir and Tschinkel describe a framework allowing a geometric interpretetation of such asymptotic formulas.
These results suggest that the asymptotic formula for a split variety $X$ (i.e., such that $\Pic(X)\to\Pic(X_{\QQbar})$ is an isomorphism) over the field $K=\QQ$  of rational numbers, with a geometrically integral divisor $D$ admitting a real point, should have the form
\[
  \alpha \tau_{\infty} \tau_{\mathrm{fin}} B (\log B)^{\rk\Pic X -1}(1+o(1))\text{,}
\]
where $\alpha$ depends on the geometry of the effective cone, $\tau_{\infty}$ is a Tamagawa volume of the boundary $D(\RR)$, and $\tau_{\mathrm{fin}}$ is product of local volumes of integral points $\mU(\ZZ_p)$. 

\bigskip

Our main result is such an asymptotic formula for a log Fano threefold that does not belong to any of the above classes. To this end, we parametrize the integral points using universal torsors. Universal torsors have been defined and studied by Colliot-Thélène and Sansuc~\cite{MR899402}; their usage to count rational points goes back to Salberger~\cite{MR1679841}, who used them to reprove Manin's conjecture for toric varieties. Since then, the technique has been used to count rational points on many other varieties. This is the first application of the torsor method to integral points.

We will count integral points on a smooth log Fano threefold $(X,D)$, where $X$ is in particular Fano, has Picard number 2 and is of type 30 in the classification of Fano threefolds~\cite{MR641971}.
Let $\pi\colon X\to\PP^3$ be the blow-up of $\PP^3=\Proj \QQ[a,b,c,d]$ along the smooth conic $C=\V(a^2+bc,d)$.
We will provide asymptotic formulas for the number of integral points on $X-D_i$, where $D_1$ is the preimage $\pi^{-1}(\V(b))$ of a plane intersecting $C$ twice in one rational point and $D_2$ is the preimage $\pi^{-1}(\V(a))$ of a plane intersecting $C$ in two rational points. Up to $\QQ$-automorphism, these are precisely the planes intersecting $C$ in rational points.
(Indeed, if $H=V(f)$ is another such plane, the term of $f$ involving $d$ can be removed by a linear change of variables while leaving $C$ invariant. We get a three-dimensional quadratic form $q(a,b,c)=a^2+bc$ with a $2$-dimensional subspace $H'=\{f=0\}$, with $(H',q)$ isometric to $(\{b=0\},q)$ in the case of one rational point and isometric to $(\{a=0\},q)$ in the case of two rational points. Witt's theorem extends this isometry to a linear transformation involving $a,b,c$ and leaving $q$ invariant.)

To construct integral models $\mU_i$ of $U_i=X-D_i$, we consider the blow-up $\mX$ of $\PP_\ZZ^3$ along $V(a^2+bc,d)$ and define $\mU_1=\mX-\overline{D_1}$, $\mU_2=\mX-\overline{D_2}$.
Manin's conjecture for rational points on this variety is known by~\cite{MR1906155}, since it is a compactification of $\addgrp^3$, so it provides a natural starting point for the investigation of integral points on threefolds by new methods. Note that even though the complete variety $X$ is an equivariant compactification, the open subvarieties $U_i$ whose integral points we are counting are not partial equivariant compactifications, so our result is not a special case of~\cite{MR2999313}. Cf.\ Lemma~\ref{lem:noaction} and Remark~\ref{rmk:compactification} for details.

We describe the sets of integral points explicitly by a universal torsor in Section~\ref{sec:torsor}.
In Section~\ref{sec:metrics}, following definitions in~\cite{MR2740045}, we construct a log-anticanonical height function $H\colon X(\QQ)\to \RR_{>0}$ in \eqref{eq:heightdesc}, measures $\tau_{(X,D_i),p}$ on $X(\QQ_p)$ in \eqref{eq:log-tamagawa} together with convergence factors that turn out to be $\lambda_p = (1-1/p)$, and a measure $\tau_{D_i,\infty}$ on $D_i(\RR)$ (Lemma~\ref{lem:arch-volumes} and before) together with a renormalization factor $c_\RR=2$. We continue with the description \eqref{eq:alpha} of constants $\alpha_i$ and an interpretation of exponent of $\log B$ in the asymptotic. 

In Sections~\ref{sec:case-V-b} and~\ref{sec:case-V-a}, we prove an asymptotic formula for the number of integral points of bounded height on $\mU_1$ and $\mU_2$. In both cases, the exceptional divisor and the strict transform of $V(d)$ are accumulating (Remarks~\ref{rmk:accumulating-1} and~\ref{rmk:accumulating-2}). Defining $V_1 = \pi^{-1} (V(bd))\subset U_1$ and $V_2 = \pi^{-1} (V(ad))\subset U_2$ to be their complements in $U_1$ and $U_2$, respectively, we count the number 
\[
  N_i(B)=\#\{x\in\mU_i(\ZZ)\cap V_i(\QQ)\mid H(x)\leq B\}.
\]
of integral points of height at most $B$ not contained in these accumulating subvarieties, for real numbers $B>1$ and $i\in\{1,2\}$.
A comparison of these formulas with the computations in the preceding section results in the following:

\begin{theorem}
  For $i\in\{1,2\}$, the number of integral points of bounded height satisfies the asymptotic formula
  \[
    N_i(B)=\alpha_i \tau_{i,\mathrm{fin}} \tau_{i,\infty} B \log B (1+o(1))\text{,}
  \]
  where
  \begin{align*}
    \tau_{i,\mathrm{fin}}&=\prod_p \lambda_p \tau_{(X,D_i),p}(\mU_i(\ZZ_p)) \qquad \text{and}\\
    \tau_{i,\infty}&=c_\RR\tau_{D_i,\infty}(D_i(\RR))\text{.}
  \end{align*}
  More explicitly, we have
  \begin{align*}
    N_1(B)&=\frac{20}{3\zeta(2)}B\log B + O(B) \qquad \text{and}\\
    N_2(B)&=\frac{20}{3}\prod_p\left(1-\frac{2}{p^2}+\frac{1}{p^3}\right) B\log B + O(B(\log\log B)^2)\text{.}
  \end{align*}
\end{theorem}

\section{A Universal Torsor}\label{sec:torsor}
The Cox ring of $X$ over $\QQbar$ is by definition
\[
  R(X_\QQbar)=\bigoplus_{d\in\Pic (X_\QQbar)} H^0(X,\cL_d),
\]
where $(\cL_d)_d$ is a suitable system of representatives of every class in the geometric Picard group; its ring structure is induced by the sum and tensor product of sections. By~\cite[Theorem 4.5, Case 30]{MR3348473}, it is
\[
  R(X_{\QQbar})=\QQbar[a,b,c,x,y,z]/(a^2+bc-yz).
\]
The Picard group of $X$ is $\Pic(X_\QQbar)\cong\Pic(X)\cong\ZZ^2$ with a basis given by the classes of the pull-back $H$ of a plane in $\PP^3$ and the exceptional divisor $E$. Note that the anticanonical bundle is $\omega_X^\vee \cong 4\cO_X(4H-E)$.
The first three generators are sections cutting out the preimages of the coordinate hyperplanes $V(a)$, $V(b)$, and $V(c)$ in $\PP^3$,
respectively. The generator $x$ cuts out the strict transform of $V(d)$, the generator $y$ the strict transform of $V(a^2+bc)$, and $z$ cuts out the exceptional divisor.
This leads to the $\Pic(X)$-grading 
\begin{center}
\begin{tabular}{cccccc}\toprule
  $a$ & $b$ & $c$ & $ x$ & $ y$ & $z$ \\ \midrule
  $1$ & $1$ & $1$ & $ 1$ & $ 2$ & $0$ \\
  $0$ & $0$ & $0$ & $-1$ & $-1$ & $1$ \\\bottomrule 
\end{tabular}
\end{center}
of the Cox ring, expressed in the above basis.
(In loc.~cit., a different basis of the Picard group is used. Moreover, the fourth and fifth generators are mistakenly swapped in the equation, making it inhomogeneous, an impossibility.)

\begin{lemma}\label{lem:torsor-closure}
  The variety
  \[
    T_\QQbar=\Spec R(X_\QQbar) - V(I_\irr),
  \]
   where $I_\irr=(a,b,c,z)(x,y)$, is a universal torsor over $X_\QQbar$.
\end{lemma}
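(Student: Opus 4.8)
The plan is to identify $T_\QQbar$ as the Cox quotient of $X_\QQbar$ and invoke the general dictionary between Cox rings and universal torsors, so that the only real work is computing the irrelevant ideal. Since $X$ is smooth and projective, $\Pic(X)\cong\ZZ^2$ is free of finite rank and $H^0(X_\QQbar,\multgrp)=\QQbar^*$; moreover $R(X_\QQbar)$ is finitely generated, so $X_\QQbar$ is a Mori dream space. For such a variety the Cox quotient is a universal torsor: letting the torus $T_{\mathrm{NS}}=\Spec\QQbar[\Pic(X)]\cong\multgrp^2$ act on $\Spec R(X_\QQbar)$ through the $\Pic(X)$-grading and letting $I\subseteq R(X_\QQbar)$ be the irrelevant ideal of the GIT problem polarised by the ample class $\omega_X^\vee$ of degree $[4,-1]$, the geometric quotient $q\colon\Spec R(X_\QQbar)-\V(I)\to X_\QQbar$ exists and is a universal torsor over $X_\QQbar$ (see~\cite{MR899402} for universal torsors and their universality via the cohomological criterion). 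It therefore remains only to show $\V(I)=\V\bigl((a,b,c,z)(x,y)\bigr)$, i.e.\ to compute the unstable locus, a point lying in it precisely when the degrees of the coordinates not vanishing there span a cone missing $[4,-1]$.

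This is a finite check. As $X_\QQbar$ is a Mori dream space, $\overline\Eff(X)_\RR$ is the cone generated by the degrees of the Cox generators, so it equals $\Cone([0,1],[1,-1])$ with $z$ and $x$ spanning the two extremal rays and $a,b,c$ (degree $[1,0]$) and $y$ (degree $[2,-1]$) lying in its interior; cutting along the latter two rays exhibits $[4,-1]$ in the interior of the chamber $\Cone([1,0],[2,-1])$. Now: at a point with $a=b=c=z=0$ only $x,y$ can be nonzero, and $[4,-1]\notin\Cone([1,-1],[2,-1])$; at a point with $x=y=0$ only $a,b,c,z$ can be nonzero, and $[4,-1]\notin\Cone([1,0],[0,1])$; whereas as soon as some coordinate among $a,b,c,z$ and some coordinate among $x,y$ are both nonzero, the corresponding two degrees already span a cone containing $[4,-1]$. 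Hence the unstable locus is exactly $\V(a,b,c,z)\cup\V(x,y)=\V\bigl((a,b,c,z)(x,y)\bigr)$, which gives the claimed description of $T_\QQbar$.

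As a sanity check on the identification of the quotient with $\mathrm{Bl}_C\PP^3$, one can make the induced morphism to $\PP^3$ explicit as $(a,b,c,x,y,z)\mapsto[a:b:c:xz]$: it is constant on $T_{\mathrm{NS}}$-orbits since all four entries have degree $[1,0]$, is defined on all of $\Spec R(X_\QQbar)-\V(I)$ because $a=b=c=xz=0$ forces a point of $\V(a,b,c,z)\cup\V(x,y)$ via the relation $a^2+bc=yz$, contracts $\V(z)$ onto $\{[a:b:c:0]:a^2+bc=0\}=C$ with $\PP^1$-fibres, and is an isomorphism over $\PP^3-C$; a projective birational morphism from a smooth threefold to $\PP^3$ of this shape is the blow-up $\pi$. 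Universality can also be seen by hand: $R(X_\QQbar)\cong\QQbar[w_1,\dots,w_5]/(w_1^2+\dots+w_5^2)\otimes_\QQbar\QQbar[x]$ is a unique factorisation domain (affine quadric cones of dimension at least $3$ are factorial) with unit group $\QQbar^*$, and $\V(I)$ has codimension at least $2$, so $T_\QQbar$ has trivial Picard group and only constant units, whence the torsor is universal. The step most likely to need care is the combinatorics identifying the unstable locus (respectively, the geometric identification of the quotient with the blow-up if one argues by hand), the factoriality and universality inputs being routine.
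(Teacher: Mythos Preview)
Your proof is correct and reaches the same conclusion as the paper's, but the route is genuinely different in presentation. The paper works through the \emph{bunch of cones} formalism of~\cite{MR3307753}: it enumerates the four two-dimensional cones in $\Pic(X)_\QQ$ containing $\omega_X^\vee=[4,-1]$, checks for each that the corresponding coordinate stratum on the hypersurface $a^2+bc=yz$ is nonempty (this is the condition $\prod_{t\in M} t\notin\sqrt{(t\mid t\notin M)}$), identifies this as the maximal bunch for $X$ via~\cite[Theorem 3.2.1.9]{MR3307753}, and then reads off $I_\irr$ as the ideal generated by the products $\prod_{t\in M} t$ over all $M$ whose cone lies in $\Phi$. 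You instead argue directly via GIT/Hilbert--Mumford: a point of $\Spec R(X_\QQbar)$ is $[4,-1]$-semistable iff $[4,-1]$ lies in the cone spanned by the degrees of its nonzero coordinates, and a short case check on the four pairs $(\{a,b,c\}\text{ or }z)\times(x\text{ or }y)$ gives the unstable locus immediately. This is cleaner for the computation of $I_\irr$ itself; what the bunch-of-cones argument buys is a more self-contained identification of the quotient with $X$ (you defer this to the general Mori-dream-space dictionary, which is fine but requires the reader to know that $[4,-1]$ is ample so that the GIT chamber is the right one). Your two closing paragraphs---the explicit map $(a,b,c,x,y,z)\mapsto[a:b:c:xz]$ recognising the blow-up, and the factoriality/units argument for universality---are not in the paper's proof of this lemma (the blow-up identification appears instead in the proof of the \emph{next} lemma over $\ZZ$), so they are a useful complement rather than a duplication. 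One small point: the reference~\cite{MR899402} concerns universal torsors but predates the Cox-ring/MDS formalism; the statement you invoke (Cox quotient of a Mori dream space is a universal torsor) is really in~\cite{MR3307753} or Hu--Keel.
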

\begin{proof}
  In addition to the ring itself, we argue using the \emph{bunch of cones} $\Phi$ associated with $X$, following~\cite[3.2]{MR3307753}. Consider the bunch
  \[
    \Phi = \{\Cone(\{\deg(t)\mid t\in M\}) \mid M \subset \{a,b,c,x,y,z\} \ \text{s.t. \ref{enum:solubility} and \ref{enum:canonical}  hold} \}
  \]
  of cones in $\Pic(X)_\RR$. Here, $M$ runs over subsets of the generators satisfying
  \begin{enumerate}[label=(\roman*)]
    \item\label{enum:solubility} $\prod_{t\in M} t \not\in \sqrt{(t\mid t\not\in M)}$,
    that is, the equation $a^2+bc-yz$ has a solution with $t=0$ for $t\not\in M$ and $t\neq 0$ for $t\in M$, and
    \item\label{enum:canonical} $\omega_X^\vee\in\Cone(\{\deg(t)\mid t\in M\})$.
  \end{enumerate}
  Concretely, this bunch of cones is
  \begin{equation}\label{eq:phi-concrete}
    \Phi=\left\{
    \Cone\left([\substack{1\\0}],[\substack{1\\-1}]\right),
    \Cone\left([\substack{0\\1}],[\substack{1\\-1}]\right),
    \Cone\left([\substack{1\\0}],[\substack{2\\-1}]\right),
    \Cone\left([\substack{0\\1}],[\substack{2\\-1}]\right)
    \right\} \text{,}
  \end{equation}
  given by, for example, the generators $\{b,x\}$, $\{z,x\}$, $\{a,y\}$, and $\{a,y,z\}$, respectively. Note that these are all possible cones containing the anticanonical bundle and that condition~\ref{enum:solubility} can be seen to hold by considering the solutions $(0,1,0,1,0,0)$, $(0,0,0,1,0,1)$, $(0,1,0,0,1,0)$, and $(1,0,0,0,1,1)$, respectively.  In particular, $\Phi$ is a \emph{true bunch} in the sense of \cite[Definition~3.2.1.1\,(ii),\,(iii)]{MR3307753}).

  By~\cite[Theorem~3.2.1.9~(ii)]{MR3307753}, $X$ is defined by a bunched ring with a maximal bunch $\Phi'$. As a consequence of the description of the ample cone in~\cite[Proposition~3.3.2.6]{MR3307753}, every cone in $\Phi'$ needs to contain the (ample) anticanonical class. It follows that $\Phi'\subset \Phi$, whence $\Phi'=\Phi$ by maximality.

  Following~\cite[Construction~3.2.1.3]{MR3307753}, set
  \begin{equation}\label{eq:torsor-constr}
    T_\QQbar =\widehat X_\QQbar = \bigcup_{\substack{M\subset (a,b,c,x,y,z) \text{ s.t.}\\\Cone(\deg(t)\mid t\in M)\in \Phi}} \left(\Spec R(X_\QQbar) - V\left(\prod_{t\in M} t \right)\right).
  \end{equation}
  By~\cite[Theorem~3.2.1.4]{MR3307753}, it is a characteristic space of $X_\QQbar$, and by~\cite[Proposition~6.1.3.9.\,(ii)]{MR3307753}, it is a universal torsor over $X_\QQbar$.
  The construction~\eqref{eq:torsor-constr} can be rewritten as $T_\QQbar = \Spec R(_\QQbar) - V(I_\irr)$, where $I_\irr$ is the \emph{irrelevant ideal}, generated by all elements of the form $\prod_{t\in M} t$ such that $M$ is a subset of the generators satisfying $\Cone(\deg(t)\mid t\in M)\in \Phi$. This yields
  \[
    I_\irr=(ax,bx,cx,zx,ay,by,cy,zy)=(a,b,c,z)(x,y),
  \]
  since the minimal subsets $M$ suffice.
\end{proof}

Next, we construct an integral model of this torsor. Consider the ring
\begin{equation}\label{eq:R_Z}
  R_\ZZ=\ZZ[a,b,c,x,y,z]/(a^2+bc-yz)
\end{equation}
and the ideal $I_{\irr,\ZZ}=(a,b,c,z)(x,y)\subset R_\ZZ$.
\begin{lemma}\label{lem:torsor-model}
  The scheme $\mT=\Spec R_\ZZ - V(I_{\irr,\ZZ})$ is a $\GG_{\imult,\ZZ}^2$-torsor over $\mX$.
\end{lemma}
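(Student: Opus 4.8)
The plan is to deduce the integral statement from its geometric counterpart, Lemma~\ref{lem:torsor-closure}, by base change and a flatness argument. First I would observe that $\mT = \Spec R_\ZZ - \V(I_{\irr,\ZZ})$ carries a natural $\GG_{\imult,\ZZ}^2$-action: the $\ZZ^2$-grading on $R_\ZZ$ inherited from the grading of $R(X_\QQbar)$ is defined over $\ZZ$ (the table of degrees involves no coefficients), so it induces a coaction $R_\ZZ \to R_\ZZ \otimes_\ZZ \ZZ[\ZZ^2]$, and this descends to the open subscheme since $I_{\irr,\ZZ}$ is a homogeneous ideal, hence $\V(I_{\irr,\ZZ})$ is invariant. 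Similarly I would recall that $\mX$ is constructed as the blow-up of $\PP^3_\ZZ$ along $\V(a^2+bc,d)$, so it is a projective, hence flat and finitely presented, $\ZZ$-scheme with geometrically integral fibers, and $\mX \times_\ZZ \QQ = X$.

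Next I would produce the torsor morphism $\mT \to \mX$. The cleanest route is to use the Cox-ring / bunched-ring description uniformly over $\ZZ$: the construction of~\cite{MR3307753} that realizes $T_\QQbar \to X_\QQbar$ as the quotient $\mT_\QQbar \to \mT_\QQbar /\!\!/ \GG_{\imult}^2$ with respect to the bunch $\Phi$ is, for a bunched ring defined over $\ZZ$, compatible with arbitrary base change of the base ring; thus performing the same GIT-type construction over $\ZZ$ yields a scheme $\mathcal{Y}$ with a $\GG_{\imult,\ZZ}^2$-torsor $\mT \to \mathcal{Y}$ whose generic fiber is $T_\QQbar \to X_\QQbar$ after the further base change to $\QQbar$, and — since $\mX$ is also the blow-up, which is the same GIT quotient — a canonical identification $\mathcal{Y} \cong \mX$. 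Concretely, one checks that on each of the four affine charts of $\mX$ cut out by inverting a product of variables corresponding to a cone of $\Phi$ (e.g. $bx$, $zx$, $ay$, $ayz$), the ring of the chart is exactly the degree-$(0,0)$ part of the corresponding localization of $R_\ZZ$, and that $\mT$ restricted to the preimage of that chart is $\Spec$ of the full localization; this is the integral analogue of the computation implicit in Lemma~\ref{lem:torsor-closure}.

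Finally I would verify the torsor axioms. \emph{Faithful flatness and local triviality}: on each of the four charts above, inverting the distinguished product of variables makes $R_\ZZ$ localized into a Laurent-polynomial extension in two of the grading directions over its degree-$(0,0)$ part — e.g. inverting $x$ and $b$ exhibits the localization as $\big((R_\ZZ)_{bx}\big)_0[x^{\pm 1}, (x^2/b)^{\pm 1}]$ or a similar explicit trivialization — so $\mT$ is Zariski-locally on $\mX$ a product $\GG_{\imult,\ZZ}^2 \times (\text{chart})$, hence fppf-locally trivial, faithfully flat and finitely presented. \emph{The action map $\mT \times \GG_{\imult,\ZZ}^2 \to \mT \times_\mX \mT$ is an isomorphism}: this can be checked after the same Zariski-local trivialization, where it is the obvious statement for a trivial torsor. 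Altogether these give that $\mT \to \mX$ is a $\GG_{\imult,\ZZ}^2$-torsor (fppf, equivalently here Zariski-locally trivial since $\GG_{\imult}$-torsors are). The main obstacle is bookkeeping rather than depth: one must make sure the four charts of the bunched-ring construction genuinely cover $\mX$ over $\ZZ$ (i.e. $\V(I_{\irr,\ZZ})$ is precisely the non-semistable locus, including in characteristic $p$ — this is where one uses that the $a^2+bc-yz$ relation and the irrelevant ideal behave the same over every residue field, so no fiber degenerates), and that the explicit trivializations are compatible on overlaps; once the chart-by-chart picture is in place, flatness of $\mX$ over $\ZZ$ and the triviality on charts give the result immediately.
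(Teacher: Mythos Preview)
Your outline follows the same two-step shape as the paper --- first exhibit $\mT$ as a $\GG_{\imult,\ZZ}^2$-torsor over a scheme built by gluing degree-$0$ localizations of $R_\ZZ$, then identify that scheme with the blow-up $\mX$ --- but both steps are left with real gaps.

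For the first step, the paper does not rely on \cite{MR3307753} being ``compatible with arbitrary base change'' (that reference works over fields); it invokes instead \cite[Theorem~3.3]{MR3552013}, which is precisely a statement over $\ZZ$ to the effect that if the degrees of the two factors of each generator of $I_{\irr,\ZZ}$ form a basis of $\ZZ^2$, then $\mT$ is a torsor over the glued scheme $\mX'$. This is why the paper first removes $yz$ from the generating set (it lies in the radical anyway) and checks the basis condition for each of the \emph{seven} remaining products $ax,bx,cx,zx,ay,by,cy$. Your ``four affine charts'' indexed by the cones of $\Phi$ is not the right bookkeeping: the charts are indexed by generators of the irrelevant ideal, not by cones, and your sample list $bx,zx,ay,ayz$ does not cover (for instance, $D(ayz)\subset D(ay)$, and points with $a=z=0$, $b,c,y\ne 0$ lie on $\mT$ but on none of your opens).

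For the second step, the sentence ``since $\mX$ is also the blow-up, which is the same GIT quotient --- a canonical identification $\mathcal{Y}\cong\mX$'' is exactly the assertion to be proved, and it is the main content of the lemma. The paper carries this out explicitly: it embeds both $R_\ZZ$ and the Rees algebra $A=\bigoplus_{n\ge 0}(a^2+bc,d)^n$ into a common function field via $z\mapsto\xi^{-1}$, $x\mapsto d\xi$, $y\mapsto(a^2+bc)\xi$, and then checks that each ring $(R_\ZZ[f_i^{-1}])^{(0)}$ coincides with the corresponding affine chart $A_{s,t}$ of $\Proj A$. Without some such computation (or an alternative argument that the blow-up of $\PP^3_\ZZ$ in this conic has Cox ring $R_\ZZ$ with irrelevant ideal $I_{\irr,\ZZ}$ over $\ZZ$), your identification $\mathcal{Y}\cong\mX$ is unproved.
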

\begin{proof}
  We first note that removing $yz$ from the set generators of $I_\irr$ does not change the radical of the ideal. The degrees of the two factors of any of the remaining generators
  \begin{equation}\label{eq:f_i}
    f_1= ax,\ f_2=bx,\ f_3 = cx,\ f_4=zx,\ f_5=ay,\ f_6=by,\ f_7=cy 
  \end{equation}
  form a basis of the Picard group; in particular, every element of the Picard group is the degree of an element of the form $a^{k_1}x^{k_2}\in R_\ZZ[f_1^{-1}]$, for some $k_1,k_2\in\ZZ$, and can analogously be written as the degree of an element in $R_\ZZ[f_i^{-1}]$ for $i\in\{2,\dots,7\}$.  
  Thus, it follows from~\cite[Theorem 3.3]{MR3552013} that $\mT =\Spec R_\ZZ - V(I_{\irr,\ZZ})$ is a $\GG_{\imult,\ZZ}^2$-torsor over the $\ZZ$-scheme $\mX'$ obtained by gluing the spectra
  \begin{equation}\label{eq:mU_i}
    \mV_i  =\Spec R_\ZZ[f_i^{-1}]^{(0)}, \quad i\in\{1,\dots,7\},
  \end{equation}
  of the degree-$0$-parts of the localizations of $R_\ZZ$ in the generators $f_i$ of the irrelevant ideal.
  This integral model $\mX'$ of $X_\QQbar$ coincides with the blow-up $\mX$.
  Indeed, we can embed both the Cox ring $R_\ZZ$ and the Rees algebra
  \[
    A=\bigoplus_{n\geq 0} I^n = \ZZ[a,b,c,d][(a^2+bc)\xi,d\xi]
  \]
  for $I=(a^2+bc,d)$ into the field $\QQ(a,b,c,d,\xi)=\Frac(A)$, where the first embedding maps $z\mapsto \xi^{-1}$, $x\mapsto d\xi$, and $y\mapsto (a^2+bc)\xi$.
  The blow-up is then given by gluing the spectra of the seven rings $A_{s,t}\subset\Frac(A)$ arising the following way:
  First take the degree-0-part (with respect to the usual grading of $\ZZ[a,b,c,d]$, not considering the natural grading of the Rees algebra) of the localizations of $A$ in $s\in\{a,b,c,d\}$, then further localize in one of the generators $t\in\{\frac{a^2+bc}{s^2}\xi$, $\frac{d}{s}\xi\}$ ($t=\xi$ suffices for $s=d$) of the Rees algebra and take the degree-0-part with respect to the grading induced by the natural grading of the Rees algebra.
  The rings $R_\ZZ[f^{-1}]^{(0)}$ for $f$ in $ax,bx,cx,zx,ay,by,cy$ coincide with the rings $A_{s,t}$ for $(s,t)$ in
  \[
    (a,d\xi/a),(b,d\xi/b),(c,d\xi/c),(d,\xi),(a,(a^2+bc)\xi),(b,(a^2+bc)\xi),(c,(a^2+bc)\xi),
  \]
  so the two schemes defined by the blow-up and~\cite[Construction 3.1]{MR3552013} coincide.
\end{proof}

Denote by $p\colon \mT\to \mX$ a morphism rendering $\mT$ such a torsor. We note that the composition of morphisms $T\to X \to \PP^3$ (on the generic fibers) maps
\begin{equation*}
  (a,b,c,x,y,z)\to\pc{a:b:c:xz}.
\end{equation*}
It follows from the observations at the beginning of this section that $V(x)\subset T$ is the preimage of the strict transform of $V(d)\subset \PP^3$, that $V(y)\subset T$ is the preimage of the strict transform of $V(a^2+bc)$, and that $V(z)\subset T$ is the preimage of the exceptional divisor $E\subset X$.

\begin{lemma}\label{lem:int-points-on-torsor}
  The morphism $p$ induces a $4$-to-$1$-correspondence between integral points on $\mX$ and
  \begin{equation}\label{eq:int-points-on-torsor}
    \mT(\ZZ)=\left\{ (a,b,c,x,y,z)\in\ZZ^6 \relmiddle|
    \substack{a^2+bc-yz=0\\ \gcd(a,b,c,z)=\gcd(x,y)=1} \right\} \text{,}
  \end{equation}
  between integral points on $\mU_1$ and
  \begin{equation}\label{eq:int-points-on-torsor-b}
    \mT_1(\ZZ)=\left\{ (a,b,c,x,y,z)\in\ZZ^6 \relmiddle|
    \substack{a^2+bc-yz=0\\ b=\pm 1, \gcd(x,y)=1} \right\}\text{,}
  \end{equation}
  and between integral points on $\mU_2$ and
  \begin{equation}\label{eq:int-points-on-torsor-a}
    \mT_2(\ZZ)=\left\{ (a,b,c,x,y,z)\in\ZZ^6 \relmiddle|
    \substack{a^2+bc-yz=0\\ a=\pm 1, \gcd(x,y)=1} \right\}\text{.}
  \end{equation}
\end{lemma}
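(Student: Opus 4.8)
The plan is to deduce the lemma from the torsor structure established above, together with the triviality of $\GG_{\imult,\ZZ}^2$-torsors over $\Spec\ZZ$. Recall that $\mT\to\mX$ is a $\GG_{\imult,\ZZ}^2$-torsor, and that restricting it over the open subschemes $\mU_1,\mU_2\subset\mX$ yields $\GG_{\imult,\ZZ}^2$-torsors $\mT_i:=\mT\times_\mX\mU_i\to\mU_i$. First I would record the general fibre count. For any $x\in\mX(\ZZ)$ the fibre $\mT\times_{\mX,x}\Spec\ZZ$ is a $\GG_{\imult,\ZZ}^2$-torsor over $\Spec\ZZ$; such torsors are classified by $H^1(\Spec\ZZ,\GG_{\imult,\ZZ})^2=\Pic(\Spec\ZZ)^2=0$, hence it is trivial. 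In particular it has a $\ZZ$-point, so $\mT(\ZZ)\to\mX(\ZZ)$ is surjective, and each of its fibres is a torsor under $\GG_{\imult,\ZZ}^2(\ZZ)=\{\pm1\}^2$, hence of cardinality $4$. Concretely, since the torus acts through the $\Pic(X)$-grading, the four preimages of a given integral point are the images of $(a,b,c,x,y,z)$ under the substitutions $(a,b,c,x,y,z)\mapsto(\lambda a,\lambda b,\lambda c,\lambda\mu x,\mu y,\mu z)$ with $\lambda,\mu\in\{\pm1\}$. The same reasoning applies over $\mU_1$ and $\mU_2$, so it remains only to describe the three relevant sets of $\ZZ$-points explicitly.

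For $\mT=\Spec R_\ZZ-V(I_{\irr,\ZZ})$: a $\ZZ$-point is a tuple $(a,b,c,x,y,z)\in\ZZ^6$ with $a^2+bc-yz=0$ (that is, a $\ZZ$-point of the affine scheme $\Spec R_\ZZ$) whose associated morphism $\Spec\ZZ\to\Spec R_\ZZ$ avoids $V(I_{\irr,\ZZ})$. The preimage of $V(I_{\irr,\ZZ})$ is the closed subscheme of $\Spec\ZZ$ cut out by the ideal generated by the images $ax,bx,cx,zx,ay,by,cy,zy$ of the generators of $I_{\irr,\ZZ}=(a,b,c,z)(x,y)$; this ideal equals $\gcd(a,b,c,z)\gcd(x,y)\,\ZZ$, so avoiding $V(I_{\irr,\ZZ})$ amounts to $\gcd(a,b,c,z)\gcd(x,y)=1$, i.e.\ $\gcd(a,b,c,z)=\gcd(x,y)=1$. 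This is~\eqref{eq:int-points-on-torsor}.

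For $\mU_1$ and $\mU_2$: using the descriptions $\mU_1=\mX-V(b)$ and $\mU_2=\mX-V(a)$ from the statement, we get $\mT_1=\mT-V(b)$, and a point of $\mT(\ZZ)$ lies in $\mT_1(\ZZ)$ iff, in addition, the integer $b$ generates the unit ideal, i.e.\ $b=\pm1$. Since $b=\pm1$ forces $\gcd(a,b,c,z)=1$, the defining conditions collapse to $b=\pm1$ and $\gcd(x,y)=1$, which is~\eqref{eq:int-points-on-torsor-b}; exchanging the roles of $a$ and $b$ gives $\mT_2(\ZZ)$ as in~\eqref{eq:int-points-on-torsor-a}. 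Combined with the fibre count of the first step, this gives the three claimed $4$-to-$1$ correspondences.

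I expect no deep obstacle here, since the substance — the torsor formalism — is already in place. The one step needing care is the bookkeeping: correctly turning "a $\ZZ$-point of the open subscheme $\Spec R_\ZZ-V(J)$" into a coprimality condition on the coordinates, and keeping track of the extra factor $a$ or $b$ in the $\mU_i$ cases; on the geometric side one should also make sure that the coordinate divisors $V(a)$, $V(b)$ on $\mX$ really are the flat closures of $D_2$, $D_1$ (which holds because $a$, $b$ have degree $[1,0]$, the class of the pullback of a plane), so that $\mU_i=\mX-V(\cdot)$ as used above.
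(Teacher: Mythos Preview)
Your argument is correct and follows essentially the same route as the paper: triviality of $\GG_{\imult,\ZZ}^2$-torsors over $\Spec\ZZ$ (via the vanishing of $\Pic(\ZZ)$) gives the $4$-to-$1$ count, and the quasi-affine description of $\mT$ translates avoidance of $V(I_{\irr,\ZZ})$ into the two coprimality conditions, with the extra unit conditions $(b)=1$ and $(a)=1$ handling $\mU_1$ and $\mU_2$. Your cohomological bookkeeping (torsors classified by $H^1$, not the $H^2$ printed in the paper) is the correct one, and the added remarks on the explicit $\{\pm1\}^2$-action and on identifying $V(a),V(b)$ with the closures $\overline{D_2},\overline{D_1}$ only make the argument more complete.
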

\begin{proof}
  The fiber $f^{-1}(P)$ of any point $P\in\mX(\ZZ)$ is a $\GG_{\imult,\ZZ}^2$-torsor. Since such torsors are parameterized by $H^2_\fppf(\Spec\ZZ,\GG_{\imult}^2)=\Cl(\ZZ)^2=1$, all fibers are isomorphic to $\GG_{\imult,\ZZ}^2$, and we get a $4$-to-$1$-correspondence between integral points on the torsor $\mT$ and those on $\mX$.

  Since $\mT$ is quasi-affine, its integral points have a description as lattice points satisfying the equation of the Cox ring and coprimality conditions given by the irrelevant ideal. Points on the preimages of $\mU_1$ and $\mU_2$ under the morphism $p\colon \mT\to\mX$ are defined by the additional condition $(b)=1$ and $(a)=1$, respectively.
\end{proof}

We conclude this section with some observations on the geometry of $X$.

\begin{lemma}\label{lem:noaction}
  There is no action of $\addgrp^3$ on $X$ with an open orbit under which $D_1$ or $D_2$ are invariant, neither is $X$ toric.
\end{lemma}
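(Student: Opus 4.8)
The plan is to reduce both assertions to the structure of the group of automorphisms of $\PP^3$ preserving the conic $C=\V(a^2+bc,d)$, and then to determine the possible $\addgrp^3$-actions. First I would note that any connected group acting on $X$ acts trivially on $\Pic(X)\cong\ZZ^2$ and hence fixes the class $[E]$ of the exceptional divisor; since $X-E\cong\PP^3-C$ with $C$ of codimension $2$, one has $H^0(X,\cO_X(E))=\QQ$, so $E$ is the only effective divisor in its class and is thus preserved, the action descends along the blow-down $\pi$ to an action on $\PP^3$ preserving $C=\pi(E)$, and the descent is faithful because an automorphism of $X$ over $\PP^3$ is the identity. As $C$ is a smooth conic it spans a unique plane, namely $\V(d)$, so every automorphism of $\PP^3$ preserving $C$ preserves $\V(d)$ and thus restricts to an affine automorphism of $\bA^3=\PP^3-\V(d)$ that preserves $C\subset\V(d)$: any connected group acting on $X$ therefore embeds into the affine group $\mathrm{Aff}_3=\mathrm{GL}_3\ltimes\addgrp^3$ of $\bA^3$, acting on the plane at infinity through its linear part $\mathrm{GL}_3\to\mathrm{PGL}_3$. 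The non-toric claim then follows quickly: the Cox ring $R(X_\QQbar)=\QQbar[a,b,c,x,y,z]/(a^2+bc-yz)$ is not a polynomial ring — its spectrum is the product of a line with the cone over a smooth quadric threefold, hence singular along a line, whereas $\bA^5$ is smooth — so $X$ is not toric, since a normal variety with finitely generated free divisor class group is toric precisely when its Cox ring is polynomial (cf.~\cite{MR3307753}); alternatively one checks that $\operatorname{Aut}(X)$ has a maximal torus of rank only $2$.

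Now suppose $\addgrp^3$ acts on $X$ with an open orbit. By the above it is a three-dimensional commutative unipotent subgroup $H\subset\mathrm{Aff}_3$, and the crux is to show that its linear part $\bar H\subset\mathrm{GL}_3$ is trivial. Since $\bar H$ is unipotent and its image in $\mathrm{PGL}_3$ preserves $C$, it lies in $\operatorname{Aut}(\PP^2,C)=\mathrm{PGL}_2$ (embedded via the second symmetric power), and the unipotent subgroups of $\mathrm{PGL}_2$ are only the trivial one and the one-dimensional ones; so $\bar H$ is trivial or one-dimensional. In the latter case the kernel $T=H\cap\addgrp^3$ of the linear part is two-dimensional, while commutativity of $H$ forces every translation in $T$ to be fixed by the linear action of each element of $H$ — but an element with non-trivial linear part acts through $\mathrm{Sym}^2$ of a regular unipotent of $\mathrm{SL}_2$, a single $3\times 3$ Jordan block (unipotence excluding a non-trivial scalar), whose fixed space is one-dimensional; this contradicts $\dim T=2$. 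Hence $\bar H$ is trivial, so $H$ is the full translation group of $\bA^3=\PP^3-\V(d)$ and acts simply transitively there.

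Finally, $\pi$ restricts to an isomorphism over $\bA^3$ (as $C\cap\bA^3=\emptyset$), and it identifies $D_i\cap\bA^3$ with $\V(b)\cap\bA^3$ when $i=1$ and with $\V(a)\cap\bA^3$ when $i=2$, in either case a non-empty proper closed subset of the homogeneous $H$-space $\bA^3$, hence not $H$-invariant; so neither $D_1$ nor $D_2$ is invariant. (The translation group does act on $X$ with an open orbit, but its boundary is the strict transform of $\V(d)$ together with $E$, and the planes defining $D_1$ and $D_2$ differ from $\V(d)$.) I expect the one-dimensional case for $\bar H$ to be the only real obstacle — the rest is bookkeeping — and it rests on the classification of unipotent subgroups of $\mathrm{PGL}_2$ and the Jordan structure of $\mathrm{Sym}^2$ of a unipotent element.
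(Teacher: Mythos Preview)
Your argument is correct, but it takes a markedly different route from the paper's. Both agree on the non-toric claim (the Cox ring is not polynomial). For the $\addgrp^3$-claim, the paper does not classify actions at all: once $E$ is seen to be invariant one has an action on $X-E\cong\PP^3-C$, and if $D_i$ is also invariant one may further remove the plane $\pi(D_i)$ to obtain an action on $\bA^3$ minus the (non-empty) trace of $C$; the open orbit then gives an open immersion $\bA^3\hookrightarrow\bA^3-C\subset\bA^3$, which Ax--Grothendieck rules out in one stroke. Your approach instead pushes the action into $\mathrm{Aff}_3$, uses the constraint that the linear part must land in $\mathrm{Aut}(\PP^2,C)\cong\mathrm{PGL}_2$, and eliminates a non-trivial linear part via the commutator/fixed-space argument with $\mathrm{Sym}^2$ of a regular unipotent, concluding that only the translation action can occur---after which $D_i$ is manifestly non-invariant. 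The paper's argument is much shorter and avoids any structural analysis of $\mathrm{Aff}_3$; your argument is longer but yields more, since it actually pins down the unique $\addgrp^3$-structure on $X$ (precisely the one recorded in Remark~\ref{rmk:compactification}), rather than merely excluding the two specific boundaries. One small point worth making explicit in your write-up: the open-orbit hypothesis is what guarantees the action is faithful (stabilizers in $\addgrp^3$ are trivial in characteristic~$0$), hence that the image $H\subset\mathrm{Aff}_3$ really is three-dimensional.
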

\begin{proof}
  For the first part, assume for contradiction that there is such an action that leaves one of the $D_i$ invariant. The action of $\addgrp^3$ on $X$ induces a continuous, whence trivial, action of $\addgrp^3(\QQ)=\QQ^3$ on $\Pic X$. Hence, the exceptional divisor is invariant under the action, which therefore restricts to $X-E \cong \PP^3 - C$. As $D_i$ is invariant, too, the action further restricts to the complement $W_i$ of $E\cup D_i$, which is isomorphic to $\PP^3 -(C\cup H_i) \cong \bA^3-C_0$ (for one of the planes $H_1=V(b),\ H_2=V(a)$)---the complement of the conic $C_0=C\cap \bA^3$ in $\bA^3$. Since the action has an open orbit $U\cong \addgrp^3$ by assumption, we get an open immersion $\bA^3\cong U \hookrightarrow W_i\subset \bA^3-C$, an impossibility by Ax--Grothendieck.

  For the second part, we note that the Cox ring of $X$ is not a polynomial ring, while all toric varieties have polynomial rings as their Cox rings, cf.\ \cite{MR1786494}.
\end{proof}

\begin{remark}\label{rmk:compactification}
  The total variety $X$ is a compactification of $\addgrp^3$, as classified by Huang and Montero~\cite{MR4104377} (induced by the action of $\addgrp^3$ on $\PP^3$, where the group acts trivially on the plane $V(d)$ and by addition on the complement). Manin's conjecture for rational points~\cite{MR1906155} and asymptotics for integral points on some open subvarieties~\cite{MR2999313} are known due to Chambert-Loir and Tschinkel: The admissible divisors $D$ are the exceptional divisor, the strict transform of $V(d)$, and their sum.
  Even though $X$ is an equivariant compactification of $\addgrp^3$, the pairs $(X,D_i)$ are neither partial equivariant compactifications of $\addgrp^3$ nor toric by the previous lemma. Our result is thus not a special case of~\cite{arXiv:1006.3345} or~\cite{MR2999313}.
\end{remark}

Lastly, we can describe the geometric Picard group with the information we gathered in the proof of Lemma~\ref{lem:torsor-closure}: The pseudo-effective cone is generated by the degrees of the generators of the Cox ring, so $\overline{\Eff}(X)=\Cone(E,H-E)$, where $H$ is the pull-back of the class of a plane in $\PP^3$. The semi-ample cone is the intersection of all cones in $\Phi$ as in \eqref{eq:phi-concrete} and thus $\SAmple(X)=\Cone(H,2H-E)$.
In particular, the log-anticanonical bundles
\[
  \omega(D_1)^\vee\cong \omega(D_2)^\vee\cong\cO_X(3H-E)
\]
are in its interior, hence ample.

\section{Metrics, Heights, Tamagawa Measures, and Predictions}\label{sec:metrics}

\subsection{Adelic metrics}
To construct a log-anticanonical height function, we endow certain line bundles with adelic metrics. For fixed $d\in\Pic(X)$, the elements of degree $d$ in the Cox rings are the global sections of a line bundle $\cL_d$ with isomorphism class $d$ (such that $\cL_d\otimes \cL_e = \cL_{d+e}$ by the construction of the Cox ring).
Fixing models of these bundles will be helpful: For $d\in \ZZ^2 \cong \Pic X$, consider the $\cO_\mX$-module $\mL_d$ obtained by gluing the degree-$d$-parts of $R_\ZZ[f_i^{-1}]$ (as modules over the degree-$0$-parts), where $R_\ZZ$ and $f_i$ as in \eqref{eq:R_Z} and \eqref{eq:f_i}. It is locally trivialized by multiplication with an element of the form $a^{k_1}x^{k_2}$ of degree $d$ on $\mV_1$ as in \eqref{eq:mU_i}, whose existence was checked in the proof of Lemma~\ref{lem:torsor-model}, and similarly on the remaining affine opens $\mV_2,\dots, \mV_7$ covering $\mX$.

The sheaf $\mL_{[3,-1]}$ is generated by the global sections
\[
  s_1 = a^2x,\ s_2 = b^2x,\ s_3 = c^2x,\ s_4 = z^2x^3,\ s_5 = ay,\ s_6 = by,\ \text{and}\ s_7 = cy,
\]
each $s_i$ being a generator on $\mV_i$, while, similarly, $\mL_{[1,0]}$ is globally generated by
$\{a,b,c,xz\}$ and $\cL_{[4,-1]}$ by the $28$ pairwise products of these sections. 
The first set of section induces a morphism $\mX\to \PP_\ZZ^{6}$ with $\mL_{[3,-1]}\cong \cO_{\PP^6_\ZZ(1)}$ and the other two sets analogous morphisms. This way, we get an adelic metric (in the sense of e.g.\ \cite[Définition~2.3]{MR2019019})
\begin{equation}\label{eq:log-metrization}
  (s,\pc{a:b:c:x:y:z}) \mapsto \frac{\abs{s(a,b,c,x,y,z)}_v}{\max\{\abs{a^2x}_v,\abs{b^2x}_v,\abs{c^2x}_v,\abs{z^2x^3}_v,\abs{ay}_v,\abs{by}_v,\abs{cy}_v\}},
\end{equation}
on $\cL_{[3,-1]}$,
where $\pc{a:b:c:x:y:z}$ is the image of $(a,b,c,x,y,z)\in T(\QQ_v)$ (i.e., a point in \emph{Cox coordinates}) in $X(\QQ_v)$ and $s \in \Frac(R(X))$ has degree $[3,-1]$, regarded as a meromorphic section of $\cL_{[3,-1]}$, and an analogously defined metric on $\cL_{[1,0]}$.
These metrics, as well as the product metric on $\cL_{[4,-1]}$, are induced by the respective models $\mL_d$ at all finite places, as the same is true for the canonical metrics on $\cO_{\PP^n}(1)$ for all $n\ge 1$.

\subsection{A log-anticanonical height function}
The metric on $\cL_{[3,-1]}\cong \omega_X^\vee$ induces the log-anticanonical height function
\[
  H\colon V(K)\to \RR_{\geq 0}, \quad x\mapsto \prod_v \norm{s(x)}^{-1}_v\text{,}
\]
where $s$ is a section that does not vanish in $x$. Since $\prod_v\abs{\alpha}_v=1$ for all $\alpha\in \QQ$, this does not depend on the choice of $s$. Since $X$ is proper, every rational point in $X(\QQ)$ lifts to a unique integral point in $\mX(\ZZ)$, which in turn corresponds to four integral points $(a,b,c,x,y,z)\in \mT(\ZZ)$ by Lemma~\ref{lem:int-points-on-torsor}.
By the coprimality condition and the equation, no prime can divide all of the monomials in the denominator of~\eqref{eq:log-metrization}. Thus we get
\begin{equation}\label{eq:heightdesc}
  H\pc{a:b:c:x:y:z}=\max\left\{\abs{a^2x},\abs{b^2x},\abs{c^2x},\abs{z^2x^3},\abs{ay},\abs{by},\abs{cy}\right\}
\end{equation}
for the image $\pc{a:b:c:x:y:z}\in X(\QQ)$ of $(a,b,c,x,y,z)\in \mT(\ZZ)$ (with the usual real absolute value).

\subsection{Tamagawa measures}

To explicitly calculate Tamagawa volumes, we need metrics on the bundles $\omega_X$, $\cO_x(D_1)$, and $\cO_X(D_2)$, not just on bundles isomorphic to them. It will turn out to be helpful to choose isomorphisms spreading out to the integral models of the line bundles constructed above. To this end, we start by noting that as $\mX$ is smooth, $\omega_\mX$ is invertible and a model of $\omega_X$. 

As $\Cl(\QQ) = 1$ (and $\mX$ is smooth), the map $\Pic \mX \to \Pic X$ is injective. Hence, $\omega_\mX^\vee\cong \mL_{[4,-1]}$ and $\cO_\mX(\overline{D_1})\cong \cO_\mX(\overline{D_2}) \cong \mL_{[1,0]}$.
To choose an explicit isomorphism, we note that, up to a unit, the canonical section $1_{D_1}$ (resp.\ $1_{D_2}$) is the unique primitive section of $\cO_\mX(\overline{D_1})$ (resp. $\cO_\mX(\overline{D_2})$) cutting out $\overline{D_1}$ (resp.\ $\overline{D_2}$). This also holds for the elements $b$ (resp. $a$) of the degree-$[1,0]$-part of the Cox ring (regarded as the global sections of the bundle $\mL_{[1,0]}$), so there exists an isomorphisms with $1_{D_1}\mapsto b$ (resp.\ $1_{D_2}\mapsto a$), and we shall use this isomorphism.
For the (anti-)canonical bundle, we consider the chart
\begin{equation}\label{eq:chart}
f\colon V\to \bA^3,\ \pc{a:b:c:x:y:z}\mapsto \left(\frac{a}{xz},\frac{b}{xz},\frac{c}{xz}\right)
\end{equation}
and its inverse
\begin{equation}\label{eq:chart-inverse}
 g\colon \bA^3\to V,\ (a_0,b_0,c_0)\mapsto \pc{a_0:b_0:c_0:1:a_0^2+b_0c_0:1} \text{,}
\end{equation}
where $V=X-V(xz)=\pi^{-1}(V(d))\cong\bA^3$, both spreading out to the integral model (more precisely, to an isomorphism $\mV_4\to \bA_\ZZ^3$ and its inverse, with $\mV_4$ as in \eqref{eq:mU_i}). Denote by $a_0$, $b_0$, and $c_0$ the coordinate functions on $\bA^3$ and their compositions with $f$.
The sections $\diff a_0 \wedge \diff b_0 \wedge \diff c_0$ and $\frac{\diff}{\diff a_0}\wedge \frac{\diff}{\diff b_0} \wedge \frac{\diff}{\diff c_0}$ of the canonical and anticanonical bundle have neither zeroes or poles on $\bA^3\cong V$, and their tensor product is $1$.
Up to a unit, they are the only primitive sections with this property. Since the analogous property holds for $x^{-4}z^{-3}$ and $x^4z^3$, we can fix isomorphisms identifying $\diff a_0 \wedge \diff b_0 \wedge \diff c_0$ with $x^{-4}z^{-3}$ and $\frac{\diff}{\diff a_0}\wedge \frac{\diff}{\diff b_0} \wedge \frac{\diff}{\diff c_0}$ with $x^4z^3$.
In particular, these isomorphisms induce adelic metrics on $\omega_X$, $\omega_X^\vee$, $\cO_X(D_1)$, and $\cO_X(D_2)$.

Recall that the adelic metric on $\omega_X$ determines a \emph{Tamagawa measure} $\tau_{X,v}$ on the $\QQ_v$-points $X(\QQ_v)$ for all places $v$.
In the local coordinates $a_0,b_0,c_0$, it is given by $\diff \tau_{X,v} = \norm{\diff a_0 \wedge \diff b_0 \wedge \diff c_0}^{-1}_v \diff \mu_v$, where $\mu_p$ is the Haar measure satisfying $\mu_p(\ZZ_p)=1$ for finite places $p$ and the Lebesgue measure $\diff\mu_\infty=\diff x$ for the archimedean place.
In the context of integral points, a modified measure that has been defined by Chambert-Loir and Tschinkel in~\cite[2.1.10]{MR2740045} tends to appear in asymptotic formulas:
The metric on $\cO_X(D_i)$ induces another measure $\tau_{(X,D_i),v}$ on $X(\QQ_v)$ on setting
\begin{equation}\label{eq:log-tamagawa}
  \diff \tau_{(X,D_i),v}=\norm{1_{D_i}}_{\cO_X(D_i)}^{-1}\diff\tau_{X,v},
\end{equation}
where $1_{D_i}$ is the canonical section of $\cO_X(D_i)$.

\subsubsection*{Finite places}

\begin{lemma}
  For any prime $p$ and $i\in\{1,2\}$, we have
  \begin{equation*}
    \tau_{(X,D_i),p}(\mU_{i}(\ZZ_p)) = \frac{\#\mU_i(\FF_p)}{p^3} = 
    \begin{cases}
      1+\frac{1}{p}               &\quad \text{if $i=1$,} \\
      1+\frac{1}{p}-\frac{1}{p^2} &\quad \text{if $i=2$.} 
    \end{cases}
  \end{equation*}
\end{lemma}
\begin{proof}
  As the metric on $\omega_X$ is induced by $\omega_\mX$ at all finite places and $\mX$ is smooth, it follows from \cite[Corollary~2.15]{MR1679841} that $\tau_X(\mU_i(\ZZ_p))=\frac{\#\mU_i(\FF_p)}{p^3}$. From an analogous argument, we can deduce that $\norm{1_{D_i}}_{\cO(D_i),p}=1$ for all $p$, whence $\tau_{(X,D_i),p}(\mU_i(\ZZ_p))=\tau_{X,p}(\mU_i(\ZZ_p))$.

  To compute the number of points modulo $p$, we start by observing that
  \[
    \#\mX(\FF_p) = \#\PP_\ZZ^3(\FF_p) + p^2 + p =  p^3 + 2p^2 + 2p + 1;
  \]
  indeed, the reduction of the conic $\overline{C}$ modulo $p$ is split, whence isomorphic to $\PP_{\FF_p}^1$ and has $p+1$ points, each of which is replaced by a projective line with $p+1$ points on the blow-up. The planes
  $V(b)$ and $V(a)$ whose preimages constitute the boundaries $\overline{D_1}$ and $\overline{D_2}$ have $p^2+p+1$ points each. The point (resp.\ two points) in the intersection of $V(a)$ (resp.\ $V(b)$) with $C$ is again replaced by $p+1$ points (each), whence
  \[
    \#\overline{D_1}(\FF_p) = p^2 + 2 p + 1
    \qquad \text{and} \qquad
    \#\overline{D_2}(\FF_p) = p^2 + 3 p + 1,
  \]
  and the assertion on the number of $\FF_p$-points on the open subschemes $\mU_i = \mX-\overline{D_i}$ follows.
\end{proof}

\subsubsection*{Archimedean place}

The metric on $\omega_X(D_i)$ induces a metric on the canonical bundle $\omega_{D_i}$ of $D_i$ via the adjunction isomorphism.
This metric induces a \emph{residue measure} $\tau_{D_i,v}$ on $D_i(\QQ_v)$ for any place $v$.
(The process could be repeated to define measures on intersections of components of reducible divisors.)
See~\cite[2.1.12]{MR2740045} for details.

\begin{lemma}\label{lem:arch-volumes}
  We have $\tau_{D_1,\infty}(D_1(\RR))=\tau_{D_2,\infty}(D_2(\RR))=20$.
\end{lemma}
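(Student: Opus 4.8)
The plan is to reduce the volume $\tau_{D_i,\infty}(D_i(\RR))$ to an explicit integral over $\RR^2$ by means of the affine chart $g\colon\bA^3\to V=X-V(xz)$ introduced above, and to evaluate that integral by a region decomposition. In Cox coordinates $D_1$ is cut out by $b=0$ and $D_2$ by $a=0$; since the degrees of $x$ and $z$ form a basis of $\Pic(X)$, every point of $V$ has a unique representative on the torsor with $x=z=1$, so $g$ restricts to isomorphisms $D_1\cap V\cong\bA^2$, $(a,c)\mapsto\pc{a:0:c:1:a^2:1}$, and $D_2\cap V\cong\bA^2$, $(b,c)\mapsto\pc{0:b:c:1:bc:1}$. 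The complement $D_i\setminus(D_i\cap V)=D_i\cap V(xz)$ is a union of curves and thus has measure zero for the two-dimensional residue measure, so it suffices to integrate $\diff\tau_{D_i,\infty}$ over these charts.

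Next I would make the residue metric explicit there. Under the identifications fixed in Section~\ref{sec:metrics}, $1_{D_1}$ corresponds to the Cox-ring element $b$ and $\diff a\wedge\diff b\wedge\diff c$ to $x^{-4}z^{-3}$, so over $V$ the bundle $\omega_X(D_1)=\omega_X\otimes\cO_X(D_1)$ is generated by the meromorphic form $b^{-1}\diff a\wedge\diff b\wedge\diff c$, whose Poincaré residue along $D_1=\{b=0\}$ is $\pm\,\diff a\wedge\diff c$. By the definition of the residue metric via the adjunction isomorphism $\omega_{D_1}\cong\omega_X(D_1)|_{D_1}$ together with the tensor-product construction of the metric on $\omega_X(D_1)$, this gives
\[
  \norm{\diff a\wedge\diff c}_{\omega_{D_1}}
  =\bigl[\,\norm{\diff a\wedge\diff b\wedge\diff c}_{\omega_X}\cdot\abs{b}^{-1}\norm{1_{D_1}}_{\cO_X(D_1)}\,\bigr]_{b=0}.
\]
Substituting the formula for $\norm{\diff a\wedge\diff b\wedge\diff c}_{\omega_X}$ established earlier and the metric on $\cL_{[1,0]}$ (so that $\abs{b}^{-1}\norm{1_{D_1}}_{\cO_X(D_1)}=\max\{\abs a,\abs b,\abs c,1\}^{-1}$ on the chart), the factor $\max\{\abs a,\abs b,\abs c,1\}$ cancels, and setting $b=0$, $y=a^2$ leaves $\norm{\diff a\wedge\diff c}_{\omega_{D_1}}=\max\{a^2,c^2,1,\abs a^3,a^2\abs c\}$. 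The identical computation for $D_2$ — with $1_{D_2}$ corresponding to $a$ and the residue of $a^{-1}\diff a\wedge\diff b\wedge\diff c$ along $\{a=0\}$, $y=bc$, equal to $\pm\,\diff b\wedge\diff c$ — gives $\norm{\diff b\wedge\diff c}_{\omega_{D_2}}=\max\{b^2,c^2,1,b^2\abs c,\abs b c^2\}$.

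Finally I would evaluate
\[
  \tau_{D_1,\infty}(D_1(\RR))=\int_{\RR^2}\frac{\diff a\,\diff c}{\max\{a^2,c^2,1,\abs a^3,a^2\abs c\}},\qquad
  \tau_{D_2,\infty}(D_2(\RR))=\int_{\RR^2}\frac{\diff b\,\diff c}{\max\{b^2,c^2,1,b^2\abs c,\abs b c^2\}}.
\]
Both integrands depend only on $\abs a,\abs c$ (resp.\ $\abs b,\abs c$), and the second is symmetric in its two variables, so after restricting to the first quadrant I would split the domain into finitely many regions — bounded by $\abs a=1$, $\abs c=1$, $\abs c=\abs a$, $\abs c=\abs a^2$ in the first case and by $b=c$, $b=1$, $c=1$ in the second — on each of which the maximum is a single monomial and the integral is elementary, the only transcendental contribution being $\int_1^\infty t^{-2}\log t\,\diff t=1$. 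Adding the pieces yields $4\cdot 5=20$ in the first case and $8\cdot\tfrac{5}{2}=20$ in the second.

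I expect the main difficulty to lie in the second step: one must keep careful track of the adjunction isomorphism together with the several identifications of Cox-ring elements with canonical sections and local generators of invertible sheaves, so that the Poincaré residue and the cancellation of $\max\{\abs a,\abs b,\abs c,1\}$ come out correctly — the computation is not deep, but a misplaced factor would propagate. Once the residue density is in the displayed monomial-maximum form, the concluding region decomposition is routine.
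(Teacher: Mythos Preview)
Your argument is correct and follows the paper's approach exactly: the same chart on $V=X-V(xz)$, the same residue-metric computation via adjunction yielding the monomial-maximum density, and the same two integrals over $\RR^2$. The only cosmetic difference is in how the final integrals are sliced --- you partition the first quadrant according to which monomial attains the maximum (which produces the contribution $\int_1^\infty t^{-2}\log t\,\diff t=1$), whereas the paper groups the regions a bit differently and avoids the logarithm --- but both decompositions are elementary and sum to $20$.
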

\begin{proof}
  Following the constructions and descriptions in loc.~cit., the adjunction isomorphism induces a metric on $\omega_{D_1}$ via
  \begin{equation}\label{eq:log-antic-norm}
      \norm{\diff a_0 \wedge \diff c_0}_{\omega_{D_1}} = \norm{\diff a_0 \wedge \diff b_0 \wedge \diff c_0}_{\omega_X}\norm{b_0}_{\cO_X(-D_1)}^{-1},
  \end{equation}
  using local coordinates coming from the chart \eqref{eq:chart}.
  Since $\diff a_0 \wedge \diff b_0 \wedge \diff c_0$ corresponds to $x^{-4}z^{-3}\in R(X)$, the first factor of \eqref{eq:log-antic-norm} is
  \begin{align*}
    &\frac{\max\{\abs{a^2x},\abs{b^2x},\abs{c^2x},\abs{z^2x^3},\abs{ay},\abs{by}, \abs{cy}\} \max\{\abs{a},\abs{b},\abs{c},\abs{xz}\}}{\abs{x^4z^3}}\\
    &\qquad =\max\{\abs{a_0^2},\abs{c_0^2},1,\abs{a_0^3}, \abs{a_0^2c_0}\}\max\{\abs{a_0},\abs{c_0},1\}\text{,}
  \end{align*}
  when evaluated in $g(a_0,0,c_0)\in V\cap D_1$. On the affine variety $V$, regarding $b_0=b/xz$ as an element of $\Gamma(V,\cO_V(-D_1))\subset \cO_V(V)$ and using the canonical trivialization of $\cO(-D_1)$ outside $D_1$, we get
  \[
    \norm{b_0}^{-1}_{\cO_X(-D_1)}
    = \lim_{b_0\to 0} \left(\frac{\abs{b_0}}{\norm{1_{D_1}}_{\cO_X(D_1)}} \right)^{-1},
  \]
  using the continuity of the metric. As $1_{D_1}$ corresponds to $b\in R(X)$ under our chosen isomorphism, the expression inside the limit is
  \[
    \left( \frac{ \abs{b} \max\{\abs{a},\abs{b},\abs{c},\abs{xz}\}}{\abs{bxz}} \right)^{-1}
  \]
  in Cox coordinates. Evaluating at $g(a_0,b_0,c_0)$ along \eqref{eq:chart-inverse} and taking the limit results in
  $\max\{\abs{a_0},\abs{c_0},1\}^{-1}$. We thus have explicit descriptions
  \begin{equation}\label{eq:tamagawa-explicit}
    \diff f_*\tau_{D_1,\infty} = \norm{\diff a_0 \wedge \diff c_0}_{\omega_{D_1}}^{-1}
    \diff a_0 \diff c_0 =
    \frac{1}{\max\{\abs{a_0^2},\abs{c_0^2},1,\abs{a_0^3}, \abs{a_0^2c_0}\}} \diff a_0 \diff c_0
  \end{equation}
  and, by an analogous argument,
  \[
    \diff f_*\tau_{D_2,\infty} = 
    \norm{\diff b_0 \wedge \diff c_0}_{\omega_{D_2}}^{-1} \diff b \diff c = \frac{1}{\max\{\abs{b_0^2},\abs{c_0^2},1,\abs{b_0^2c_0}, \abs{b_0c_0^2}\}} \diff b_0 \diff c_0
  \]
  of the Tamagawa measures $\tau_{D_1,\infty}$ and $\tau_{D_2,\infty}$ with respect to the Lebesgue measure.

 To compute the volume of the first divisor, we integrate \eqref{eq:tamagawa-explicit} and perform a change of variables $a=a_0$, $b=b_0$ for notational convenience, getting
  \begin{align*}
    \tau_{D_1,\infty}(D_1(\RR))
    =
    &\int_{\abs{a},\abs{a^2c}\leq 1} \frac{1}{\max\{\abs{c^2},1\}} \diff a \diff c
    + \int_{\substack{\abs{a}\geq\abs{c}\\\abs{a}> 1}} \frac{1}{\abs{a^3}} \diff a \diff c \\ 
     &\qquad + \int_{\substack{\abs{c}>\abs{a}\\\abs{a^2c}>1}} \frac{1}{\max\{\abs{c^2},\abs{a^2c}\}} \diff a \diff c \text{.}
  \end{align*}
  The first term of this expression is $\frac{20}{3}$ by \eqref{eq:integral-case-V-b} below, the second is $\int_{\abs{a}}\frac{2}{\abs{a^2}}=4$ and the third is
  \begin{equation}\label{eq:integral-inf-place-2-step}
    \int_{\substack{\abs{c}>\abs{a} , \abs{a^2c}>1 \\ \abs{a^2} > \abs{c}}}
    \frac{1}{\abs{a^2c}} \diff a \diff c
    + \int_{\substack{\abs{c}>a , \abs{a^2c}>1 \\ \abs{a^2}\leq \abs{c}}} \frac{1}{\abs{c^2}}\diff a \diff c
    \text{.}
  \end{equation}
  In \eqref{eq:integral-inf-place-2-step}, the first term is
  \begin{equation*}
    \int_{\substack{\abs{c}\geq 1 \\ \abs{c}^{1/2} < \abs{a} < \abs{c}}} \frac{1}{\abs{a^2c}} \diff a \diff c=
    \int_{\abs{c}\geq 1} \frac{2}{\abs{c}} (\abs{c}^{1/2} - \abs{c}^{-1}) \diff c=4
  \end{equation*}
  and the second is
  \begin{equation*}
    \int_{a\in\RR} \frac{2}{\max\{\abs{a^2},\abs{a^{-2}}\}} \diff a
    = \int_{\abs{a}\leq 1} 2\abs{a}^2 \diff a
    + \int_{\abs{a}>1} \frac{2}{\abs{a^2}} \diff a
    = \frac{16}{3}\text{.}
  \end{equation*}
  Thus, \eqref{eq:integral-inf-place-2-step} is $\frac{28}{3}$ and $\tau_{D_1,\infty}(D_1(\RR))=\frac{20}{3} + 4 + \frac{28}{3}=20$.

  For the other divisor, we get $\tau_{D_2,\infty}(D_2(\RR))=20$ by similar arguments.
\end{proof}

\subsubsection*{Convergence Factors}

Following \cite[2.4]{MR2740045}, these measures are renormalized with factors associated with the virtual Galois module
\[
  \EP(U_i) =
  [(\QQbar[U_i]^\times/\QQbar^\times) \otimes \QQ] -[\Pic U_{\overline{\QQ}} \otimes \QQ]
\]
at the finite places.
Since both $U_1$ and $U_2$ have only constant nowhere vanishing global sections over any algebraically closed field
and the Galois group acts trivially on the geometric Picard groups, this leads to the trivial module $\EP(U_i) = - [\QQ]$ for both $i=1,2$. Its Artin $L$-function is
\[
  L(s, -[\QQ]) = \prod_p (1-1/p^s) = \zeta^{-1}(s),
\]
and evaluating its factors at $s=1$ leads to the convergence factors
\[
  \lambda_p=1-\frac{1}{p}.
\]
In general, the resulting product of measures is multiplied with the principal value of the $L$-function; however, this is simply $\lim_{s\to 1}(s-1)^{-1}\zeta(s)^{-1}=1$ in our case.

The residue measures at the infinite places are renormalized by factors depending on the fields of definition of the divisor components. For a geometrically integral divisor $D$, this is simply a factor of $c_\RR=2$ at any real place; see Sections 3.1.1 and 4.1 of~\cite{MR2740045} for details.

\subsection{The constant \texorpdfstring{$\alpha$}{alpha}}
Previous results such as~\cite{MR2999313} suggest that in the case of a number field with only one infinite place and a geometrically irreducible divisor $D$, a factor $\alpha$ completely analogous to Peyre's for rational points should appear in asymptotic formulas, replacing the anticanonical bundle by the log-anticanonical bundle in \cite[Définition~4.8]{MR2019019}:
Consider the pseudo-effective cone $\overline\Eff(X)\subset\Pic(X)_\RR$ and its characteristic function $\charfun_{\overline\Eff(X)}(\cL)=\int_{\overline\Eff(X)^\vee} e^{-\langle \cL,t\rangle}\diff t$ (with respect to the Haar measure on $\Pic(X)^\vee_\RR$ normalized by $\Pic(X)^\vee$).
Then define
\begin{equation}\label{eq:alpha}
  \alpha_i=\frac{1}{(\rk\Pic(X)-1)!} \charfun_{\overline\Eff(X)}(\omega_X(D_i)^\vee).
\end{equation}

In our case, $\overline\Eff(X)=\Cone(E,H-E)$ is spanned by a basis of $\Pic(X)$ and $\omega_X(D_i)^\vee\cong\cO_X(3(H-E)+2E)$; hence, $\alpha_i=1/6$ for both $i=1,2$ can be easily computed using \cite[Proposition~5.3~(ii)]{MR1620682}.

\subsection{The exponent of \texorpdfstring{$\log B$}{log B}}
In previous work such as \cite{arXiv:1006.3345,MR2999313}, the exponent of $\log B$ in asymptotic formulas is $b-1$, where
\[
  b = -\rk \EP(U_i)^G + d,
\]
for $G=\Gal(\QQ)$, and where $d$ depends on incidence properties of the boundary divisor $D$, as encoded in the \emph{Clemens complex}.
In our case of geometrically integral $D_i$, the Galois invariant part of the virtual module has rank
$\rk\EP(U_i)^G = -\rk\Pic U_i = 1-\rk\Pic X$,
while the \emph{Clemens complex} just consists of a vertex, resulting in $d=1$. In particular, $b=\rk\Pic X$ in this case.

\section{Integral Points on \texorpdfstring{$X-D_1$}{X - D\_1}}\label{sec:case-V-b}

We study the number
\[
  N_1(B)=\#\{x\in\mU_1(\ZZ)\cap V_1(\QQ)\mid H(x)\leq B\}
\]
of integral points of bounded height on $\mU_1 = \mX - \overline{V(b)}$ that, as rational points, are in the complement $V_1$ of $V(bxz)=\pi^{-1}(V(bd))$.

Using the $4$-to-$1$-correspondence \eqref{eq:int-points-on-torsor-b} with integral points on the universal torsor $\mT_1$ and noticing the symmetry in the two values $\pm 1$ of $b$ in \eqref{eq:int-points-on-torsor-b}, this description of integral points on the universal torsor yields the formula
\begin{equation*}
  N_1(B)=\frac{1}{2} \, \# \left\{(a,c,x,y,z)\in \ZZ^5 \relmiddle| \substack{
    a^2+c-yz=0\text{, } \gcd(x,y)=1,\\
    H(a,1,c,x,y,z)\leq B\text{, }
    x,z\neq 0
  }  \right\}\text{,}
\end{equation*}
where
\[
  H(a,b,c,x,y,z)=\max{\{ \abs{a^2x},\abs{b^2x},\abs{c^2x},\abs{z^2x^3},\abs{ay},\abs{by},\abs{cy}\}}
\]
by~\eqref{eq:heightdesc}.
Solving the equation, we can simplify this to
\begin{equation*}
  \frac{1}{2} \, \# \left\{(a,x,y,z)\in \ZZ^4 \relmiddle|
  \substack{
     \gcd(x,y)=1\text{, } \widetilde H_1(a,x,y,z)\leq B, \\
     x,z\neq 0
  } \right\}\text{,}
\end{equation*}
where
\begin{align*}
    \widetilde H_1(a,x,y,z)&=H(a,1,yz-a^2,x,y,z) \\
    &=\max\{\abs{a^2x},\abs{x},\abs{(yz-a^2)^2x},\abs{z^2x^3},\abs{ay},\abs{y},\abs{(yz-a^2)y}\}.
\end{align*}

\begin{lemma}
  We have
  \[
    N_1(B)=\frac{1}{2}\sum_{\alpha>0} \frac{\mu(\alpha)}{\alpha} \sum_{x',z\in\ZZnz}
    \int_{\substack{\abs{a^2 \alpha x'},\abs{a(a^2+c)z^{-1}},\\ \abs{c^2 \alpha x'},\abs{c(a^2+c)z^{-1}}, \\ \abs{\alpha^3x'^3z^2} \leq B }} \frac{1}{\abs{z}} \diff a \diff c + O(B)\text{.}
  \]
\end{lemma}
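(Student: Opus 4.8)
The plan is to pass from the lattice-point count to an integral over a region in the remaining variables by a standard two-step procedure: first use Möbius inversion to remove the coprimality condition $\gcd(x,y)=1$, then treat the sum over one of the variables — here $x$ — as a Riemann sum and replace it by an integral, controlling the error. Starting from the formula
\[
  N_1(B)=\frac{1}{2} \, \# \left\{(a,x,y,z)\in \ZZ^4 \relmiddle|
  \substack{
     \gcd(x,y)=1\text{, } \widetilde H_1(a,x,y,z)\leq B, \\
     a,x,z\neq 0
  } \right\}\text{,}
\]
I would write $\gcd(x,y)=1$ as $\sum_{\alpha \divs \gcd(x,y)}\mu(\alpha)$ and substitute $x=\alpha x'$, $y=\alpha y'$. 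After this substitution the height $\widetilde H_1$ becomes the expression appearing inside the integral in the statement (with $c=yz-a^2=\alpha y' z - a^2$), so the condition $\widetilde H_1 \le B$ is exactly the system of inequalities displayed there, now in the variables $(a, c, x', y', z)$ — note that $c$ and $y'$ determine each other once $z$ is fixed, so one may use $(a,c)$ as the free continuous variables and regard $y' = (c+a^2)/(\alpha z)$ as determined.

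The heart of the argument is then to show that for fixed $\alpha, z$ and fixed $x'$, summing over the remaining two discrete variables and then over $x'$, one may replace the count of lattice points $(a,c)$ (equivalently $(a,y')$) by the Lebesgue integral $\frac{1}{\abs z}\int \diff(a,c)$ — the factor $\frac{1}{\abs z}$ being the Jacobian of the change from counting $y'\in\ZZ$ to integrating over $c$, since $\diff c = \alpha\abs z\,\diff y'$ and there is a compensating $1/\alpha$ already, or more precisely $y'$ runs over integers with spacing translating to spacing $\alpha\abs z$ in $c$; one should double-check this normalization against the $\frac{1}{\abs z}$ in the target and the explicit $N_2$ formula. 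Concretely I would first sum over $a$ and $c$ (or $y'$) for fixed everything else using a lattice-point-in-a-region estimate: the region cut out by the inequalities is convex-ish in a suitable parametrization, or at least has boundary of controlled measure, so the count equals the area plus an error bounded by the perimeter. Then summing the perimeter contributions over $x'$ (a sum that is geometric-like because of the constraint $\abs{\alpha^3 x^3 z^2}\le B$, forcing $\abs{x'}\ll (B/\alpha^3 z^2)^{1/3}$) and over $z$ gives a total error of size $O(B)$.

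The step I expect to be the main obstacle is bounding the error term uniformly in $\alpha$ and $z$ well enough that, after summing $\sum_{\alpha}\frac{\mu(\alpha)}{\alpha}\sum_z$, the whole thing is genuinely $O(B)$ and not something larger like $O(B\log B)$. This requires care because the region in the $(a,c)$-plane degenerates for extreme values of $z$ and $x'$ (it becomes long and thin), so a naive "area plus perimeter" bound could produce logarithmic losses; one needs to exploit that the defining inequalities $\abs{a^2\alpha x'}\le B$ etc.\ bound $a$ and $c$ polynomially, making the perimeter genuinely smaller than the trivial bound in the relevant ranges, and then verify that the resulting multiple sum $\sum_{\alpha, z, x'}(\text{perimeter bound})$ converges to $O(B)$. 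A secondary technical point is handling the variable $a$ being counted as an integer too (rather than integrated): I would sum over $a\in\ZZnz$ at the outset, or incorporate $a$ into the lattice-point estimate together with $c$; in either case the conditions $a, x, z \ne 0$ contribute only a negligible correction since they remove lower-dimensional pieces of the region.
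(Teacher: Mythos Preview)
Your overall strategy --- M\"obius inversion followed by replacing the discrete sums over $y'$ and $a$ by an integral in $(a,c)$ with the Jacobian factor $1/\lvert z\rvert$ --- matches the paper's. Two remarks, one cosmetic and one substantive.

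First, your opening sentence names the wrong variable: you say you will ``treat the sum over one of the variables --- here $x$ --- as a Riemann sum''. In the lemma, $x'$ and $z$ \emph{remain} discrete summation variables; it is the pair $(a,y')$ (equivalently $(a,c)$) that is traded for a two-dimensional integral. Your later paragraphs get this right, so this is presumably a slip, but it is worth fixing.

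Second, and more to the point, the paper does not attempt a single two-dimensional lattice-point count with an ``area plus perimeter'' bound. Instead it proceeds \emph{sequentially}: for fixed $(\alpha,a,x',z)$, the height conditions cut out an \emph{interval} in $y'$ (each inequality is linear in $y'$ once $a$ is fixed), so the count in $y'$ equals the length plus $O(1)$; summing this $O(1)$ over the range $\lvert \alpha a^2 x'\rvert,\lvert \alpha^3 x'^3 z^2\rvert\le B$ gives $O(B)$ with no logarithmic loss. Only then is the sum over $a$ replaced by an integral, with the error controlled by $\sup_a V_1(\alpha,a,x',z;B)$ and summed to $O(B^{2/3})$. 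The change of variables $c=\alpha y'z-a^{2}$ is performed \emph{after} both summations have become integrals.

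Your proposed two-dimensional approach is not wrong in principle, but the region in the $(a,y')$-plane has genuinely curved (parabolic and cubic) boundaries coming from conditions like $\lvert(\alpha y'z-a^{2})^{2}\alpha x'\rvert\le B$ and $\lvert a(\alpha y'z-a^{2})\rvert\le B\lvert z\rvert$, so ``convex-ish'' is optimistic and a perimeter bound would need careful justification --- precisely the degeneration issue you yourself flag. The paper's one-variable-at-a-time order sidesteps this entirely: in $y'$ the region is always a single interval, so the error really is $O(1)$ pointwise and the summation you were worried about is routine.
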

\begin{proof}
  A Möbius inversion yields
  \[
    N_1(B)=\sum_{\alpha>0}\mu(\alpha)\sum_{\substack{a\in\ZZ\\ x',z \in \ZZnz}}\#\{y' \in \mathbb{Z} \mid \widetilde H_1(a,\alpha x', \alpha y',z)\leq B \}\text{.}
  \]
  We get
  \[
    \#\{y^\prime \in \mathbb{Z} \mid \widetilde H_1(a,\alpha x', \alpha y',z)\leq B \} = V_1(\alpha,a,x',z;B) + O(1) \text{,}
  \]
  where
  \[
    V_1(\alpha,a,x',z;B) = \int_{\widetilde H_1(a,\alpha x', \alpha y',z)\leq B} \diff y'.
  \]
  Note that by its definition, $V_1(\alpha,a,x',z;B)=0$ whenever the variables $\alpha$, $a$, $x'$, and $z$ violate a height condition involving only them. 
  In the following steps, we get similar estimates with an error term of the form $O(C\sup_\xi f(\xi))$ when replacing the sum over $\xi$ of a non-negative function $f$ that is piecewise differentiable and whose derivative changes sign at most $C$ times by an integral; by~\cite[Lemma 3.6]{MR3269462}, such a constant $C$ exists and is independent of the remaining variables, resulting in the error $O(\sup_\xi f(\xi))$. 
  We can bound the sum over the error term by
  \[
    \ll \sum_{\substack{\alpha >0,\ a\in\ZZ,\ x',z \in\ZZnz\\\abs{\alpha a^2 x'}, \abs{\alpha x'}, \abs{\alpha^3 z^2x'^3} \le B}} 1 \ 
    \ll \sum_{\substack{\alpha >0,\ x'\in\ZZnz\\ \abs{\alpha x'}\le B}} \frac{B}{\lvert \alpha x'\rvert^2}
    \ll B,
  \]
  after noting that the remaining height condition implies $B^{1/2}/\abs{\alpha z'}^{1/2}\gg 1$ when estimating the sum over $a$ so that we can ignore the additional $O(1)$ error term arising from the possibility of $a$ being $0$.
  Hence, 
  \[
    N_1(B)= \sum_{\alpha>0}\mu(\alpha)\sum_{a\in \ZZ,\ x',z\in\ZZnz} \int_{\widetilde H_1(a,\alpha x', \alpha y',z)\leq B} \diff y' + O(B).
  \]

  Turning to the variable $a$ next we estimate the sum $\sum_{a\in\ZZ} V_1(\alpha,a,x',z;B)$ by the integral
  \[
    V_2(\alpha,x',z;B)=\int_{a \in \RR} V_1(\alpha,a,x',z;B)\diff a,
  \]
  introducing an error bounded by
  \[
    \begin{aligned}
      &\ll \sum_{\substack{\alpha>0,\ x',z\in\ZZnz\\\abs{\alpha^3x'^3z^2}\le B}} \sup_{a\in\ZZ} V_1(\alpha,a,x',z;B)
      \ll \sum_{\substack{\alpha>0,\ x',z\in\ZZnz\\\abs{\alpha^3x'^3z^2}\le B}} \frac{B^{1/2}}{\alpha^{3/2} \abs{x'}^{1/2}\abs{z}} \\
      &\ll \sum_{\substack{\alpha>0,\ z\in\ZZnz}} \frac{B^{2/3}}{\alpha^2 \abs{z}^{4/3}} \ll B^{2/3} \text{,}
    \end{aligned}
  \]
  where we use the condition $\abs{(\alpha y'z-a^2)^2\alpha x'}\leq B$ to estimate the integral $V_1$.
  A change of variable $c=\alpha y' z-a^2$ now results in the description
  \[
    V_2(\alpha,x',z;B)
    = \int_{\substack{\abs{a^2 \alpha x'},\abs{a(a^2+c)z^{-1}},\\ \abs{c^2 \alpha x'},\abs{c(a^2+c)z^{-1}}, \\ \abs{\alpha^3x'^3z^2} \leq B }} \frac{1}{\abs{\alpha z}} \diff a \diff c
  \]
  of the main term.
\end{proof}

\begin{lemma}
  We have
  \begin{equation} \label{eq:count-2-2}
    N_1(B) = \frac{1}{2} \sum_{\alpha>0}\frac{\mu(\alpha)}{\alpha^2}
    \int_{\substack{\abs{a^2 x},\abs{a^3z^{-1}}, \abs{c^2x},\\ \abs{a^2c z^{-1}}, \abs{x^3z^2}\leq B\\\abs{z}\geq 1,
  \abs{x}\geq \alpha }} \frac{1}{\abs{z}} \diff a \diff c \diff x \diff z + O(B) \text{.}
  \end{equation}
\end{lemma}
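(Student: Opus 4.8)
The plan is to handle the two surviving discrete summations — over $x'$ and over $z$ — by comparison with integrals, to absorb the auxiliary variable $\alpha$ by the substitution $x=\alpha x'$, and finally to simplify the shape of the region by replacing the terms $|a(a^2+c)z^{-1}|$ and $|c(a^2+c)z^{-1}|$ in the height conditions by $|a^{3}z^{-1}|$ and $|a^{2}cz^{-1}|$.

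First I would replace, successively, $\sum_{z\in\ZZnz}$ by $\int_{|z|\ge 1}\diff z$ and $\sum_{x'\in\ZZnz}$ by $\int_{|x'|\ge 1}\diff x'$, in each step invoking the one-dimensional summation-by-integration estimate \cite[Lemma 3.6]{MR3269462} that was already used in the two previous lemmas: each replacement costs an error of the shape $\max_{\xi}|f|$, with $f$ the integrand with the remaining variables integrated out. The point requiring care is to bound the resulting sums of error terms by $O(B)$. The height conditions provide exactly the needed decay: $|\alpha^{3}x'^{3}z^{2}|\le B$ confines both $z$ and $x'$ to bounded ranges, $|a^{2}\alpha x'|\le B$ and $|c^{2}\alpha x'|\le B$ bound the area of the $(a,c)$-region by $\ll B/(\alpha|x'|)$, and $|a(a^2+c)z^{-1}|\le B$, $|c(a^2+c)z^{-1}|\le B$ force additional decay as $|z|$ decreases; summing against $\mu(\alpha)/\alpha$ over $\alpha$ then yields $O(B)$, exactly as before.

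Next I would substitute $x=\alpha x'$ in the surviving main term. Since $\diff x=\alpha\diff x'$, the constraint $|x'|\ge 1$ becomes $|x|\ge\alpha$, and $|a^{2}\alpha x'|=|a^{2}x|$, $|c^{2}\alpha x'|=|c^{2}x|$, $|\alpha^{3}x'^{3}z^{2}|=|x^{3}z^{2}|$; thus $\tfrac{\mu(\alpha)}{\alpha}\sum_{x'}$ turns into $\tfrac{\mu(\alpha)}{\alpha^{2}}\int_{|x|\ge\alpha}\diff x$, producing the claimed coefficient, the region $|x|\ge\alpha$, and the height conditions $|a^{2}x|,|c^{2}x|,|x^{3}z^{2}|\le B$.

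It then remains to replace $|a(a^2+c)z^{-1}|\le B$ and $|c(a^2+c)z^{-1}|\le B$ by $|a^{3}z^{-1}|\le B$ and $|a^{2}cz^{-1}|\le B$. Because $|a|,|z|\ge 1$ and $|x|\ge\alpha\ge 1$, on the whole region we have $|a^{2}z^{-1}|\le|a^{2}x|\le B$, $|c^{2}z^{-1}|\le|c^{2}x|\le B$, and $|acz^{-1}|\le\max\{a^{2},c^{2}\}\le\max\{|a^{2}x|,|c^{2}x|\}\le B$; writing $a(a^2+c)=a^{3}+ac$ and $c(a^2+c)=a^{2}c+c^{2}$, it follows that each of the two families of conditions forces the other to hold with $B$ replaced by $2B$. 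Hence the symmetric difference of the two regions is contained in a union of ``shells'' on which one of $|a^{3}z^{-1}|$, $|a^{2}cz^{-1}|$, $|a(a^2+c)z^{-1}|$, $|c(a^2+c)z^{-1}|$ lies in $(B,2B]$, further cut out by the remaining height conditions, and what must be shown is that the integral of $|z|^{-1}$ over this set, summed against $\mu(\alpha)/\alpha^{2}$, is $O(B)$. This is the main obstacle: a careless estimate loses a factor $\log B$, since the shell $\{B<|a^{3}z^{-1}|\le 2B\}$ together with $|a^{2}x|\le B$, $|x^{3}z^{2}|\le B$ and $|x|\ge\alpha$ naively contributes of order $B\log B$. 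The resolution is that on such a shell the requirement that the admissible range of $c$ be nonempty thins the shell in $z$ to relative width $\asymp|a|^{-1}$ and makes the measure of admissible $c$ decay linearly as $z$ crosses it, so that the joint contribution of $c$ and $z$ is $O(1)$ and the whole set has measure $O(B^{2/3})$; summing against $\mu(\alpha)/\alpha^{2}$ keeps this $O(B^{2/3})$. Granting this bound, the two families of conditions may be interchanged up to $O(B)$ and \eqref{eq:count-2-2} follows.
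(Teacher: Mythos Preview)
Your overall plan is reasonable and would eventually lead to \eqref{eq:count-2-2}, but it runs in the opposite order from the paper and this causes two concrete problems that your sketch does not resolve.

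\textbf{Order of operations.} The paper first replaces the two occurrences of $a^{2}+c$ by $a^{2}$ (still with $x',z$ discrete), and only afterwards passes from sums over $z$ and $x'$ to integrals. The reason is that with the simplified conditions one has the clean bound $|a|\le (B|z|)^{1/3}$, hence $V_2'(\alpha,x',z;B)\ll B^{5/6}|\alpha x'|^{-1/2}|z|^{-2/3}$, which makes both sum-to-integral errors $O(B)$ immediately. In your order you must bound $\sup_{|z|\ge 1}V_2(\alpha,x',z;B)$ for the \emph{unsimplified} region. Using only $|a|,|c|\le(B/|\alpha x'|)^{1/2}$ gives $V_2\ll B/(|\alpha x'z|)$ and hence an error of size $B\log B$ after summing over $x'$. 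This can be repaired by invoking your own factor-$2$ comparison earlier (it shows $V_2(\cdot;B)\le V_2'(\cdot;2B)$, so the paper's bound still applies), but you only introduce that comparison in your final step; as written, the claim ``$O(B)$, exactly as before'' is not justified.

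\textbf{The shell estimate.} This is the real gap. The symmetric difference is indeed contained in a union of shells, and you are right that a careless bound loses $\log B$. But your proposed mechanism --- ``the admissible range of $c$ thins the shell in $z$ to relative width $\asymp|a|^{-1}$, the joint $(c,z)$ contribution is $O(1)$, and the measure is $O(B^{2/3})$'' --- does not hold up: the $z$-shell $\{B<|a^{3}/z|\le 2B\}$ has relative width $\asymp 1$, the thinning coming from $|c|\le(B/|x|)^{1/2}$ depends on $x$ rather than on $|a|^{-1}$, and the resulting bound is $O(B^{5/6})$, not $O(B^{2/3})$. The device you are missing is the change of variables the paper uses: writing the shell as $|a^{2}-B|z|/|a||\le|c|$ and substituting $a'=a^{2}-B|z|/|a|$ with Jacobian $|\diff a'/\diff a|=2|a|+B|z|/a^{2}\ge\sqrt{|a'|}$ turns the shell into $|a'|\le|c|$; integrating $\int_{|a'|\le|c|}|a'|^{-1/2}\diff a'\ll|c|^{1/2}$, then using $|c^{2}x|\le B$ and $|x^{3}z^{2}|\le B$, yields $O(B^{5/6})$. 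This trick works equally well after your passage to integrals, so your ordering is not fatal --- but the argument you actually wrote for the shell is not one.
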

\begin{proof}
  We first want to replace the two instances of $a^2+c$ by $a^2$ in the inequalities defining the region for the volume function $V_2$ of the previous lemma, to get a new volume function $V_2'(\alpha,x',z;B)$.
  The error we introduce when replacing $\abs{a(a^2+c)z^{-1}}$ by $\abs{a^3z^{-1}}$ is bounded by the integral over the region
  \[
    B-\abs{\frac{ac}{z}}\leq \abs{\frac{a^3}{z}} \leq B+\abs{\frac{ac}{z}}\text{,} \qquad \text{i.e.,} \qquad
    \abs{a^2-\frac{B\abs{z}}{\abs{a}}}\leq\abs{c}\text{.}
  \]
  With a change of variable $a'=a^2-B\abs{z}\abs{a}^{-1}$, where
  \[
    \abs{\frac{\diff a'}{\diff a}}=2\abs{a}+\frac{B\abs{z}}{\abs{a}^2}\geq
     \sqrt{\abs{a'}}\text{,}
  \]
  we can bound the total error by
  \[
    \begin{aligned}
      &\ll \sum_{\alpha>0}\frac{1}{\alpha} \sum_{x',z\in\ZZnz}
      \int_{\substack{\abs{a'}\leq\abs{c},\abs{\alpha c^2x'},\\\abs{\alpha^3 x'^3z^2}\leq B}} \frac{1}{\sqrt{\abs{a'}}\abs{z}} \diff a' \diff c\\
      & \ll
      \sum_{\alpha>0}\frac{1}{\alpha} \sum_{x',z\in\ZZnz} \int_{\substack{\abs{\alpha^3 x'^3 z^2},\\\abs{\alpha c^2 x'}\leq B}} \frac{\sqrt{\abs{c}}}{\abs{z}} \diff c
      \ll \sum_{\alpha>0} \frac{1}{\alpha} \sum_{\substack{x',z\in\ZZnz \\\abs{\alpha^3 x'^3 z^2} \leq B}} \frac{B^{3/4}}{\alpha^{3/4}\abs{x}^{3/4}\abs{z}}\\
      & \ll \sum_{\substack{\alpha>0 \\ z\in\ZZnz}} \frac{B^{5/6}}{\alpha^{7/4}\abs{z}^{7/6}}
      \ll B^{5/6}\text{.}
    \end{aligned}
  \]
  When modifying the other inequality, the error we introduce is bounded by an integral over a similar region, and, after an analogous change of variable, we get the same bound.

  Next, we estimate the summation over $z$. Using the height conditions $\abs{a}\leq B^{1/3}\abs{z}^{1/3}$ and $\abs{c}\leq B^{1/2}\abs{\alpha x}^{-1/2}$, we can bound the volume
  \[
    V_2'(\alpha,x',z;B) \ll \frac{B^{5/6}}{\abs{\alpha x}^{1/2}\abs{z}^{2/3}}\text{.}
  \]
  Replacing the sum over $z$ by an integral, we introduce an error
  \[
    \ll \sum_{\alpha>0} \frac{1}{\alpha^{3/2}} \sum_{1\leq \abs{x'} \leq B^{1/3}}\frac{B^{5/6}}{\abs{x}^{1/2}} \ll B \text{.}
  \]
  For $V_3(\alpha,x';B)=\int_{\abs{z}\geq 1} V_2'(a,x',z;B)\diff z$, we get an upper bound
  \[
    V_3(\alpha,x';B)\ll \int_{\abs{\alpha^3x'^3z^2}\leq B} \frac{B^{5/6}}{\alpha^{1/2}\abs{x'}^{1/2}\abs{z}^{2/3}} \diff z
    \ll \frac{B}{\alpha \abs{x'}}\text{.}
  \]
  Finally, replacing the sum over $x'$ by an integral
  $\int_{\abs{x'}\geq 1} V_3(\alpha,x';B)$ introduces an error term
  \[
    \ll \sum_{\alpha>0} \frac{B}{\alpha^2} \ll B,
  \]
  and a change of variables $x=\alpha x'$ completes the proof.
\end{proof}

\begin{proposition}
  The number of integral points of bounded height on $\mU_1$ satisfies the asymptotic formula
  \[
  N_1(B)=\frac{20}{3\zeta(2)} B\log B+O(B)\text{.}
  \]
\end{proposition}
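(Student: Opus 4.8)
The plan is to start from~\eqref{eq:count-2-2} and first dispose of the sum over $\alpha$. The integral appearing there depends on $\alpha$ only through the lower bound $\abs{x}\geq\alpha$, so I would rewrite it as the same integral with $\abs{x}\geq 1$ minus the integral over the range $1\leq\abs{x}<\alpha$. Pairing $\mu(\alpha)/\alpha^2$ with the first term produces the Euler product $\sum_{\alpha>0}\mu(\alpha)/\alpha^2=1/\zeta(2)$; for the second, a crude estimate of the slice at a fixed value of $x$ (bounding the remaining three variables by the height conditions) shows that this slice is $O(B/\abs{x})$, so that the range $1\leq\abs{x}<\alpha$ contributes $O(B\log\alpha)$ and hence its total contribution is $\sum_\alpha\alpha^{-2}\cdot O(B\log\alpha)=O(B)$. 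This reduces the proposition to the single assertion
\[
  I(B):=\int_{\substack{\abs{a^2x},\,\abs{a^3z^{-1}},\,\abs{c^2x},\\\abs{a^2cz^{-1}},\,\abs{x^3z^2}\leq B\\\abs{a},\abs{x},\abs{z}\geq 1}}\frac{\diff(a,c,x,z)}{\abs{z}}=\frac{40}{3}B\log B+O(B)\text.
\]

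The region and the integrand defining $I(B)$ are invariant under the sign change of each of the four variables, so it suffices to compute the integral over the positive orthant and multiply by $16$. There I would integrate out $c$ first: the two inequalities involving $c$ read $0<c\leq\min\{(B/x)^{1/2},Bz/a^2\}$, so the inner integral is exactly that minimum. Passing to the logarithmic coordinates $a=e^A$, $x=e^X$, $z=e^Z$ then turns $I(B)$ into
\[
  16\int_{\Delta}e^{A+X+\min\{\frac12(L-X),\ L+Z-2A\}}\diff(A,X,Z)\text,\qquad L:=\log B\text,
\]
where $\Delta=\{A,X,Z\geq 0,\ 2A+X\leq L,\ 3A-Z\leq L,\ 3X+2Z\leq L\}$.

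The crucial feature is the degeneracy of this polytope: the linear forms $2A+X$, $3A-Z$, $3X+2Z$ and the exponent above are all constant in the direction $(1,-2,3)$, and the constraint $2A+X\leq L$ is implied by the other two, becoming an equality only along a single edge $\gamma_L$ of $\Delta$, whose length is proportional to $L$. One checks that the exponent is $\leq L$ throughout $\Delta$, with equality exactly on $\gamma_L$. Writing the exponent as $L-Q$ with $Q\geq 0$ vanishing on $\gamma_L$ and growing linearly in the two transverse directions, the integral equals $e^L$ times the length of $\gamma_L$ times the transverse integral of $e^{-Q}$ over a corner, which is a positive constant, independent of the position along $\gamma_L$ by the translation invariance in the direction $(1,-2,3)$; the remaining faces of $\Delta$, the two endpoints of $\gamma_L$, and the part of $\Delta$ on which $Q$ is bounded away from $0$ together contribute only $O(e^L)=O(B)$. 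The constant in front of $B\log B$ is pinned down by the elementary evaluation
\begin{equation}\label{eq:integral-case-V-b}
  \int_{\abs{a}\leq 1,\ \abs{a^2c}\leq 1}\frac{\diff(a,c)}{\max\{\abs{c^2},1\}}=\frac{20}{3}\text,
\end{equation}
giving $I(B)=\frac{40}{3}B\log B+O(B)$; combined with the first paragraph this yields $N_1(B)=\frac{20}{3\zeta(2)}B\log B+O(B)$.

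The main obstacle I anticipate is purely organizational. One has to enumerate all the subregions arising from the nested maxima (the $\min$ above, or equivalently the $\max$ in $\int\diff z/z=\log\bigl((B/x^3)^{1/2}/\max\{1,a^3/B,a^2c/B\}\bigr)$ if one chooses to integrate $z$ out first, together with the boundary between the two branches of the $\min$), carry out the rescaling and the iterated integration case by case, and check each time that only the pieces adjacent to the edge $\gamma_L$ feed into the $B\log B$ term while the rest is $O(B)$. Keeping the error at the sharp size $O(B)$, rather than something weaker such as $O(B\log\log B)$, also depends on the fact, already verified in the preceding lemmas, that each error term incurred when the sums over $y'$, $a$, $z$, $x'$, $\alpha$ were replaced by integrals, and when $a^2+c$ was replaced by $a^2$, is itself $O(B)$.
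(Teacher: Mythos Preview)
Your plan is sound and would yield the result with the stated error $O(B)$, but its organization differs from the paper's in two places worth noting.

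First, you dispose of the sum over $\alpha$ at the outset by splitting off the range $1\le|x|<\alpha$ and bounding the corresponding slice by $O(B/|x|)$ (this bound is correct; it is exactly the estimate $V_3(\alpha,x';B)\ll B/(\alpha|x'|)$ established just before~\eqref{eq:count-2-2}). The paper instead carries $\alpha$ through to the very end: after its change of variables the only $\alpha$-dependence sits inside $\log(B|x|^3\alpha^{-3})$, and separating $\log B$ from $\log(|x|^3/\alpha^3)$ produces the error $R_3(B)\ll B$ while the main term picks up $\sum_\alpha\mu(\alpha)\alpha^{-2}=1/\zeta(2)$. Both routes give the same $O(B)$.

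Second, your polytope analysis in logarithmic coordinates is the abstract counterpart of the paper's concrete step: the paper first drops the constraint $|a|\ge 1$ (at cost $O(B)$) and then performs the substitution $a\mapsto az^{-1/3}B^{-1/3}$, $c\mapsto cz^{-1/3}B^{-1/3}$, $x\mapsto xz^{2/3}B^{-1/3}$. In your $(A,X,Z)$-coordinates this is precisely the shear aligning your edge $\gamma_L$ (direction $(1,-2,3)$) with the $Z$-axis, after which the $z$-integral $\int_1^{B^{1/2}|x|^{3/2}}dz/|z|$ produces the factor $\log B$ directly. The paper's route has the advantage that the ``transverse integral'' you invoke becomes an explicit three-dimensional integral $\int_{|a^2x|,|c^2x|,|a|,|a^2c|,|x|\le 1}d(a,c,x)$, which after integrating $x$ \emph{is} $2\int_{|a|,|a^2c|\le 1}\max\{1,a^2,c^2\}^{-1}\,d(a,c)=2\cdot\tfrac{20}{3}$; this is the step your proposal asserts but does not carry out, and without it the identification of the leading constant with~\eqref{eq:integral-case-V-b} remains a claim rather than a computation.
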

\begin{proof}
  By a change of variables
  \begin{equation*}
  a\mapsto a z^{-1/3}B^{-1/3},\quad c\mapsto c z^{-1/3}B^{-1/3},\quad x\mapsto x z^{2/3}B^{-1/3},
  \end{equation*}
  we get
  \begin{align}
      N_1(B) & =\frac{1}{2} \sum_{\alpha>0} \frac{\mu(\alpha)}{\alpha^2}  \int_{\substack{\abs{a^2x}, \abs{c^2x}, \abs{a},\\ \abs{a^2c}, \abs{x}\leq 1,\\ 1\leq\abs{z}\leq\abs{x}^{3/2}B^{1/2}\alpha^{-3/2}}} \frac{B}{\abs{z}}  \diff a \diff c \diff x \diff z + O(B) \nonumber \\
      & = \frac{1}{2} \sum_{\alpha>0} \frac{\mu(\alpha)}{\alpha^2} \int_{\substack{\abs{a^2x}, \abs{c^2x}, \abs{a},\\ \abs{a^2c}, \abs{x}\leq 1}}
      B\left(\log\left(B \abs{x}^3 \alpha^{-3}  \right) \right) \diff a \diff c \diff x + O(B),\label{eq:sum-estimate-N_1} 
  \end{align}
  as the error introduced by omitting the condition $1\le \abs{x}^{3/2}B^{1/2}\alpha^{-3/2}$ is
  \[
    \begin{aligned}
      &\ll \sum_{\alpha>0} \frac{B}{\alpha^2} \int_{\substack{\abs{x}^{3/2}\alpha^{-3/2}B^{1/2},\\ \abs{a},\abs{c^2x} \leq 1}}
      \abs{\log\left(B \abs{x}^3 \alpha^{-3} \right)} \diff a
       \diff c \diff x \\
      & \ll \sum_{\alpha>0} \frac{B}{\alpha^2}
      \int_{\abs{x}\leq \alpha B^{-1/3}}
      zx\abs{\log\left(B^{1/3}\abs{x}\alpha^{-1}\right)} \frac{1}{\abs{x}^{1/2}} \diff x 
      \ll \sum_{\alpha>0} \frac{B}{\alpha^2} \frac{\alpha^{1/2}}{B^{1/6}}  \ll B^{5/6}.
    \end{aligned}
  \]
  Removing the factor $\abs{x}^3\alpha^{-3}$ in the logarithm in~\eqref{eq:sum-estimate-N_1} results in
  \[
    N_1(B) = \frac{1}{2} \sum_{\alpha>0} \frac{\mu(\alpha)B \log B}{\alpha^2} \int_{\substack{\abs{a^2x}, \abs{c^2x}, \abs{a},\\ \abs{a^2c}, \abs{x}\leq 1}} \diff a \diff c \diff x + O(B),
  \]
  as the error introduced this way is 
  \[
    \begin{aligned}
      & \ll \sum_{\alpha > 0} \frac{B}{\alpha^2} \int_{\abs{a},\abs{c^2x},\abs{x}\leq 1} 
      \abs{\log\left( \frac{\abs{x}}{\alpha} \right)} \diff a \diff c \diff x \\
      & \ll \sum_{\alpha > 0} \frac{B}{\alpha^2} \int_{\abs{x} \leq 1} 
      \frac{1}{\abs{x}^{1/2}} \abs{\log\left( \frac{\abs{x}}{\alpha} \right)} \diff x
       \ll \sum_{\alpha > 0} \frac{B}{\alpha^2} (2+ \log(\alpha))
       \ll B \text{.}
    \end{aligned}
  \]
  Now, integrating over $x$ results in
  \[
    N_1(B) = \frac{B \log B}{\zeta(2)} \int_{\substack{\abs{a},\abs{a^2c}\leq 1}} \frac{1}{\max\{1,\abs{a^2},\abs{c^2}\}} \diff a \diff c + O(B).
  \]
  Finally, we note that the integral evaluates to
  \begin{equation}\label{eq:integral-case-V-b}
    \int_{\substack{\abs{a},\abs{a^2c}\leq 1}} \frac{1}{\max\{1,\abs{c^2}\}} \diff a \diff c
    =\int_{\abs{c}\leq 1} 2 \diff c + \int_{\abs{c}>1} \frac{2}{\abs{c}^{5/2}}\diff c 
    = \frac{20}{3} \text{,}
  \end{equation}
  and arrive at the asymptotic expression.
\end{proof}

\begin{remark}\label{rmk:accumulating-1}
  The strict transform of $V(d)$ (corresponding to points with $x=0$ on the universal torsor) and exceptional divisor (corresponding to points with $z=0$) are accumulating. For all $B>0$, they contain the images of 
  \begin{align*}
    &\{(a,1,c,0,1,a^2+bc) \in\ZZ^6\mid \abs{a},\abs{c}\le B,\ ac\ne 0\}\subset \mT_1(\ZZ) \quad \text{and} \\ 
    & \left\{
      (a,1,-a^2,x,y,0)\in\ZZ^6 \ \middle|\ 
      \substack{
        \abs{a}\le B^{1/8},\ \abs{x},\abs{y}\le B^{1/2},\\
        axy\ne 0,\ \gcd(x,y)=1
      }
    \right\}\subset \mT_1(\ZZ),
  \end{align*}
  respectively.
  All points in these sets have height at most $B$, and each set contains $\gg B^{9/8}$ points.
\end{remark}

\section{Integral Points on \texorpdfstring{$X-D_2$}{X - D\_2}}\label{sec:case-V-a}

We count the number
\[
  N_2(B)=\#\{x\in \mU_2(\ZZ)\cap V_2(\QQ)\mid H(x)\leq B\}
\]
of integral points of bounded height on $\mU_2=\mX - \overline{V(a)}$, that, as rational points, are in the complement $V_2$ of $V(axz)=\pi^{-1}(V(ad))$.

\begin{lemma}
  We have
  \begin{equation}\label{eqn:countingproblem1}
    N_2(B)=\frac{1}{2} \, \# \left\{(b,c,x,y,z)\in \ZZ^5 \relmiddle|
    \substack{
      1+bc-yz=0\text{, } \gcd(x,y)=1, \\ H(1,b,c,x,y,z)\leq B,\\b,c,x,z\neq 0
    }
    \right\} + O(B)\text{.}
  \end{equation}
\end{lemma}
\begin{proof}
  With the $4$-to-$1$-correspondence to integral points on the torsor, and noticing the symmetry in the two possible values $a=\pm 1$ of $a$ in~\eqref{eq:int-points-on-torsor-a}, we get the expression up to the missing conditions $b,c\ne 0$, which will help in the following arguments. We note that if $b=0$, the torsor equation reads
  $1-yz=0$, implying $y,z\in \{\pm 1\}$. Using the height conditions $\abs{a^2x},\,\abs{c^2x}\le B$ (and $a^2=1$), we get
  \begin{align*}
    &\#\{(a,0,c,x,y,z) \in \mT_2(\ZZ) \mid H(a,b,c,x,y,z)\le B, x\ne 0\} \\
    & \qquad\qquad\ll \#\{ (\pm 1,0,c,x,\pm 1,\pm 1)\in \ZZ^6 \mid x\ne 0, \abs{c^2x}\le B \} \\
    & \qquad\qquad\ll \sum_{x\in \ZZnz, \abs{x}\le B} \frac{B^{1/2}}{\abs{x}^{1/2}} \ll B.
  \end{align*}
  Hence, we can add the condition $b\ne 0$ found in \eqref{eqn:countingproblem1}, introducing an error $O(B)$, and by an analogous argument, we can add $c\ne 0$.
\end{proof}

\begin{lemma}
  We have
  \begin{equation*}
    N_2(B)=\sum_{b,x,z\in\ZZnz}\theta_1(b,x,z)V_1(b,x,z;B) + O(B)\text{,}
  \end{equation*}
  where
  \[
      V_1(b,x,z;B)=\frac{1}{2}\int_{\substack{\widetilde H_2(b,c,x,z)\leq B\\\abs{b},\abs{c},\abs{x},\abs{z}\ge 1}}  \frac{1}{\abs{z}} \diff c
  \]
  with
  \begin{align*}
    \widetilde H_2 (b,c,x,z)
    & = H(1,b,c,x,(1+bc)z^{-1},z)\\
    & =\max\left\{\abs{x},\abs{b^2x},\abs{c^2x},\abs{z^2x^3},
    \abs{\frac{(1+bc)}{z}},\abs{\frac{b(1+bc)}{z}},\abs{\frac{c(1+bc)}{z}}\right\}\text{,}
  \end{align*}
  and $\theta_1(b,x,z)=\prod_p\theta_1^{(p)}(b,x,z)$ with
  \[
    \theta_1^{(p)}(b,x,z)= \begin{cases}
      0 & \text{if}\quad p\mid b,\ p\mid z \text{,}\\
      1-\frac{1}{p} & \text{if}\quad p\nmid b,\ p\mid x,  \\
      1 & \text{otherwise.}
    \end{cases}
  \]
\end{lemma}
\begin{proof}
  Using a Möbius inversion to remove the condition $\gcd(x,y)=1$ in~\eqref{eqn:countingproblem1},
  and setting $y'=\frac{y}{\alpha}$, we get
  \[
    N_2(B) =
    \frac{1}{2} \sum_{b,x,z\in \ZZnz} \sum_{\alpha\mid x}
    \mu(\alpha) \widetilde{N_2}(\alpha,b,x,z;B) \text{,}
  \]
  where
  \[
    \widetilde{N_2}(\alpha,b,x,z;B)=
    \#\left\{(c,y')\in \ZZ^2 \relmiddle|
    \substack{
      c\neq 0\text{, } 1+bc-y'\alpha z=0,\\
      H(1,b,c,x,\alpha y',z)\leq B
    } \right\}\text{.}
  \]
  To estimate $\widetilde N_2$, we first note that $\widetilde{N_2}(\alpha,b,x,z;B)=0$ whenever $\alpha z$ and $b$ are not coprime.
  If they are coprime, we estimate
  \[
    \begin{aligned}
      \widetilde{N_2}(\alpha,b,x,z;B) 
      & = \# \left\{ c \in \ZZnz \relmiddle|
      \substack{
        bc \equiv -1 \pmod{\alpha z},\\
        \widetilde H_2(b,c,x,z)\leq B
      }
      \right\} \\
      & = \int_{\substack{\widetilde H_2(b,c,x,z)\leq B\\\abs{c}\ge 1}} \frac{1}{\abs{\alpha z}} \diff c+ O(1) \text{,}
    \end{aligned}
  \]
  analogously to the first case.
  This inequality together with the height conditions $\abs{b^2x}\leq B$ and $\abs{z^2x^3}\leq B$ allows us to bound the summation over the error terms by
  \begin{displaymath}
    \ll \sum_{\substack{b,x,z\in\ZZnz\\ \abs{b^2x}, \abs{z^2x^3}\leq B}}\sum_{\alpha | x } \abs{\mu(\alpha)}
    \ll \sum_{x\in\ZZnz} \frac{2^{\omega(x)} B}{\abs{x}^2} \ll
    B \text{,}
  \end{displaymath}
  using that $2^{\omega(x)}\ll_\varepsilon x^\varepsilon$ for all $\varepsilon>0$. We arrive at
  \begin{equation*}
    N_2(B)=\sum_{b,x,z}\sum_{\substack{\alpha | x\\ \gcd(b, \alpha   z)=1}}\frac{\mu(\alpha)}{\alpha} V_1(b,x,z;B) + O(B)\text{,}
  \end{equation*}
  where
  \[
    V_1(b,x,z;B)=\frac{1}{2}\int_{\substack{\widetilde H_2(b,c,x,z)\leq B\\\abs{b},\abs{c},\abs{x},\abs{z}\ge 1}}  \frac{1}{\abs{z}} \diff c \text{.}
  \]
  Using the multiplicativity of $\mu$ and $\gcd$, we can factor the sum over $\alpha$
  \[
    \begin{aligned}
      \sum_{\substack{\alpha | x\\\gcd(b, \alpha   z)=1}}
      \frac{\mu(\alpha)}{\alpha}
       = \prod_p\begin{cases}
        0 & \text{if}\quad p\mid b,\ p\mid z\text{,}\\
        1-\frac{1}{p} & \text{if}\quad  p\nmid b,\ p\mid x, \\
        1 & \text{otherwise}
      \end{cases}
    \end{aligned}
  \]
  to get a description of the arithmetic term $\theta_1$.
\end{proof}

\begin{lemma}
  We have
  \begin{equation*}\label{eq:count-1-2}
    N_2(B)=\sum_{b,z} \theta_2(x,z) V_2(x,z;B) + O(B\left(\log\log B)^2\right)\text{,}
  \end{equation*}
  where
  \[
      V_2(x,z;B)=\frac{1}{2}\int_{\substack{\widetilde H_2(b,c,x,z)\leq   B\\\abs{b},\abs{c},\abs{x},\abs{z}\ge 1}}  \frac{1}{\abs{z}} \diff b \diff c
  \]
  and $\theta_2(x,z)= \prod_p \theta_2^{(p)}(x,z)$ with
  \[
  \theta_2^{(p)}(x,z)=\begin{cases}
    (1-\frac{1}{p})^2 &\text{if}\quad p \mid x,z\text{,}\\
    1-\frac{1}{p}+\frac{1}{p^2} 
                      &\text{if}\quad p \mid x,\ p\nmid z\text{,}\\
    1-\frac{1}{p}     &\text{if}\quad p\nmid x,\ p\mid z\text{,}\\
    1                 &\text{if}\quad p\nmid xz\text{.}
  \end{cases}
  \]
\end{lemma}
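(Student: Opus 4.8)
The plan is to carry out the summation over $b$. Each summand $V_1(b,x,z;B)$ is an integral over $c$ only; performing the $b$-sum should turn this into the double integral $V_2(x,z;B)$ over $(b,c)$, while the arithmetic weight $\theta_1(b,x,z)$ gets replaced by its average in $b$, which I claim is exactly $\theta_2(x,z)$. This is the analogue for $b$ of the summations over $y'$, $a$, $z$ and $x'$ carried out in the preceding lemmas.

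First I would exchange the order of summation so that, for fixed $x$ and $z$, the sum over $b\in\ZZnz$ is innermost. The point is that $\theta_1(b,x,z)$ depends on $b$ only through which of the finitely many primes $p\mid xz$ divide $b$, so it is periodic in $b$. Using the expression $\theta_1(b,x,z)=\sum_{\alpha\mid x,\,(b,\alpha z)=1}\mu(\alpha)/\alpha$ obtained in the previous lemma, together with a further Möbius inversion to remove the condition $(b,\alpha z)=1$, the weighted $b$-sum decomposes into $O(2^{\omega(xz)})$ plain sums of the form $\sum_{\beta\mid b}V_1(b,x,z;B)$ with $\beta$ squarefree, $\beta\mid\prod_{p\mid xz}p$.

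Each such sum is then compared with the corresponding integral: \cite[Lemma 3.6]{MR3269462}, applied to the function $b'\mapsto V_1(\beta b',x,z;B)$, gives $\sum_{\beta\mid b}V_1(b,x,z;B)=\tfrac1\beta\int_{\abs b\ge 1}V_1(b,x,z;B)\,\diff b+O\bigl(\sup_b V_1(b,x,z;B)\bigr)$, and by Fubini $\int_{\abs b\ge 1}V_1(b,x,z;B)\,\diff b=V_2(x,z;B)$. Collecting the main terms over $\alpha$ and $\beta$ leaves the coefficient $\sum_{\alpha\mid x}\tfrac{\mu(\alpha)}{\alpha}\prod_{p\mid\alpha z}(1-\tfrac1p)$ in front of $V_2(x,z;B)$; this equals the average of $\theta_1(\cdot,x,z)$ over one period, which one computes prime by prime using that $p\mid b$ has density $1/p$ — for instance, for $p\mid x$, $p\nmid z$ the local value is $(1-\tfrac1p)(1-\tfrac1p)+\tfrac1p\cdot 1=1-\tfrac1p+\tfrac1{p^2}$, and the other three cases are analogous — so it is $\prod_p\theta_2^{(p)}=\theta_2(x,z)$.

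The remaining, and main, task is to bound the accumulated error by $O(B(\log\log B)^2)$. Here I would use the height conditions contained in $V_1$: the constraint $\abs{c^2x}\le B$ controls the $c$-integral and yields $\sup_b V_1(b,x,z;B)\ll B^{1/2}\abs x^{-1/2}\abs z^{-1}$, while $\abs{b^2x}\le B$ and $\abs{x^3z^2}\le B$ truncate the ranges of $x$ and $z$. Summing $O(\sup_b V_1)$ over the $O(2^{\omega(xz)})$ divisors $\beta$ and then over $x$ and $z$, the divisor sums are estimated by $\sum_{\beta\mid\prod_{p\mid n}p}\beta^{-1}\ll\prod_{p\mid n}(1+\tfrac1p)\ll\log\log n\ll\log\log B$, used once for the primes dividing $x$ and once for those dividing $z$; these produce the two factors of $\log\log B$, and the size bound on $\sup_b V_1$ together with the truncations keeps the rest below $B$. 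The delicate point is to keep this error genuinely of smaller order than the eventual main term of order $B\log B$, which is why one expands $\theta_1$ along the divisors $\alpha\mid x$ rather than splitting $b$ into all $\prod_{p\mid xz}p$ residue classes at once, and uses both height truncations $\abs{b^2x}\le B$ and $\abs{x^3z^2}\le B$.
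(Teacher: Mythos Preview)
Your approach is sound and more elementary than the paper's. The paper does not carry out the $b$-summation by hand; instead it verifies the bound
\[
V_1(b,x,z;B)\ \ll\ \frac{B}{|bxz|}\Bigl(\frac{B}{|b^2x|}\Bigr)^{-1/6}\Bigl(\frac{B}{|z^2x^3|}\Bigr)^{-1/6}
\]
(obtained from $|c^2x|\le B$ and $|b(1+bc)/z|\le B$ by taking a geometric mean), checks that $\theta_1\in\Theta'_{4,3}(1)$ in the sense of \cite[Definition~7.8]{MR2520770}, and then invokes \cite[Proposition~3.9]{MR2520770} with $r=0$, $s=2$ as a black box. That proposition packages precisely the ``replace a sum weighted by a multiplicative function by an integral'' step you are performing, and it is the source of the $(\log\log B)^2$ in the statement.

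Your hand-made route via a double M\"obius inversion works and in fact yields a power saving: with $\sup_b V_1\ll B^{1/2}|x|^{-1/2}|z|^{-1}$ and the truncations $|x|\le B^{1/3}$, $|z|\le B^{1/2}|x|^{-3/2}$, the accumulated error over all $(\alpha,\beta,x,z)$ is $\ll B^{2/3+\varepsilon}$, comfortably inside $O(B(\log\log B)^2)$. One inaccuracy to fix: the contribution of the $\beta$-sum to the error is $\sum_{\beta\mid\mathrm{rad}(\alpha z)}1=2^{\omega(\alpha z)}$, not $\sum_\beta\beta^{-1}$, since each $\beta$ contributes an $O(\sup_b V_1)$ error with no $1/\beta$ gain; so the two $\log\log B$ factors do not arise the way you describe. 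This does not matter for the conclusion, because the $|x|^{-1/2}$ saving in $\sup_b V_1$ absorbs the divisor-function losses. The paper's route is quicker to write down but leans on external machinery; yours is self-contained and, once the bookkeeping is straightened out, gives a sharper error.
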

\begin{proof}
  Using the height conditions $\abs{c^2x},\abs{b(1+bc)z^{-1}}\leq B$ to estimate the integral, we can bound the volume function by the geometric average
  \[
    \begin{aligned}
    V_1(b,x,z;B) & \ll \frac{1}{\abs{z}}   \left(\frac{B^{1/2}}{\abs{x}^{1/2}}\right)^{2/3}
    \left(\frac{B \abs{z}}{\abs{b}^2} \right)^{1/3} \\
    & \ll \frac{B}{\abs{bxz}} \left(\frac{B}{\abs{b^2x}}\right)^{-1/6}
    \left(\frac{B}{\abs{z^2x^3}}\right)^{-1/6} \text{.}
    \end{aligned}
  \]
  Since the integral is zero whenever $\abs{b^2x}\geq B$ or $\abs{z^2x^3}\geq B$, the assertion follows by~\cite[Proposition 3.9]{MR2520770} with $r=0$, $s=2$.
  (In the notation of loc.\ cit.\ we consider the ordering $\eta_0=b,\eta_1=x,\eta_2=z$ of the variables, take $a_1=a_2=1/6$, and $k_{i,j}$ to be the exponents in these two height conditions. Note that $\theta_1$ satisfies~\cite[Definition 7.8]{MR2520770}, and hence the requirements of the proposition.)
\end{proof}

\begin{lemma}\label{lem:N_2-by-volume}
  We have
  \begin{equation*}
    N_2(B)= \frac{1}{2} \prod_p \left(1-\frac{2}{p^2}+\frac{1}{p^3}\right) \int_{\substack{\widetilde H_2(b,c,x,z)\leq   B\\\abs{b},\abs{c},\abs{x},\abs{z}\ge 1}}  \frac{1}{\abs{z}} \diff  b \diff c \diff x \diff z + O(B(\log\log B)^2) \text{.}
  \end{equation*}
\end{lemma}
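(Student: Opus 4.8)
The plan is to sum out the two remaining variables $x$ and $z$ from $\sum_{x,z}\theta_2(x,z)V_2(x,z;B)$, replacing this sum by the corresponding integral times the product over all primes of the local averages of $\theta_2$. This is the same type of manipulation used in the previous lemma to sum out $b$, so I would again invoke the averaging result \cite[Proposition 3.9]{MR2520770}, now applied to the variables $x$ and $z$ (in two passes: first summing out $x$, which produces an arithmetic function of $z$ alone, then summing out $z$), after checking that $\theta_2$ and the pointwise bounds on $V_2$ satisfy its hypotheses. Each pass introduces errors of the shape already encountered, and together with the $O(B(\log\log B)^2)$ error inherited from the previous lemma these are absorbed into the stated error term.

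The constant that appears is $\prod_p c_p$, where $c_p$ is the average of $\theta_2^{(p)}$ over the $p^2$ residue classes of $(x,z)$ modulo $p$:
\[
  c_p=\frac{1}{p^2}\Bigl(\bigl(1-\tfrac1p\bigr)^2+(p-1)\bigl(1-\tfrac1p+\tfrac1{p^2}\bigr)+(p-1)\bigl(1-\tfrac1p\bigr)+(p-1)^2\Bigr)=1-\frac{2}{p^2}+\frac{1}{p^3}\text{.}
\]
Equivalently, writing $\theta_2(x,z)=\sum_{d\mid x,\,e\mid z}h(d,e)$ and matching the local factors $\theta_2^{(p)}$ (which see only whether $p\mid x$ and whether $p\mid z$), one finds that $h$ is supported on coprime squarefree pairs, with $h(1,1)=1$, $h(p,1)=-\tfrac1p+\tfrac1{p^2}$, $h(1,p)=-\tfrac1p$ and, crucially, $h(p,p)=0$; hence $\prod_p c_p=\sum_{d,e}h(d,e)/(de)=\prod_p\bigl(1+\tfrac{h(p,1)}{p}+\tfrac{h(1,p)}{p}\bigr)=\prod_p\bigl(1-\tfrac2{p^2}+\tfrac1{p^3}\bigr)$. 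This decomposition is also the concrete mechanism behind the averaging step: substituting $x=dx'$, $z=ez'$ turns the sum into $\sum_{d,e}h(d,e)\sum_{x',z'}V_2(dx',ez';B)$, replacing the inner sums by integrals and undoing the substitution produces the constant $\sum_{d,e}h(d,e)/(de)$ times $\int V_2(x,z;B)\,\diff(x,z)$, and the height condition $\abs{z^2x^3}\le B$ hidden in $V_2$ forces $d\ll B^{1/3}$, $e\ll B^{1/2}$, so only $O((\log B)^2)$ harmonic mass in $(d,e)$ contributes.

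Finally, unwinding $V_2(x,z;B)=\tfrac12\int\abs{z}^{-1}\diff(b,c)$ identifies $\int V_2(x,z;B)\,\diff(x,z)$ with $\tfrac12\int_{\widetilde H_2(b,c,x,z)\le B,\ \abs{b},\abs{c},\abs{x},\abs{z}\ge1}\abs{z}^{-1}\diff(b,c,x,z)$, which gives the asserted formula. I expect the main obstacle to be verifying that $\theta_2$, as a genuinely two-variable multiplicative function, fits the hypotheses of the cited averaging result and that the resulting sum-to-integral errors stay within $O(B(\log\log B)^2)$ uniformly over the auxiliary parameters, in the presence of the region cut out by the height inequalities defining $V_2$, which is unbounded in several directions; the vanishing $h(p,p)=0$, which makes $h$ factor through coprime pairs, is what keeps this bookkeeping tractable.
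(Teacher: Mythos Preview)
Your plan is sound and your constant computation is correct, but the paper takes a slightly different (and shorter) route: rather than iterating Proposition~3.9 in two passes or unfolding the M\"obius decomposition via $h(d,e)$, it bounds
\[
  V_2(x,z;B)\ \ll\ \frac{B^{5/6}}{\lvert x\rvert^{1/2}\lvert z\rvert^{2/3}}
  \ =\ \frac{B}{\lvert xz\rvert}\left(\frac{B}{\lvert x^{3}z^{2}\rvert}\right)^{-1/6},
\]
notes that $V_2(x,z;B)=0$ once $\lvert x^{3}z^{2}\rvert>B$, and then applies \cite[Proposition~4.3]{MR2520770} with $r=s=1$ in a single step to replace $\sum_{x,z}\theta_2(x,z)V_2(x,z;B)$ by $\bigl(\prod_p c_p\bigr)\int V_2\,\diff(x,z)$. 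That proposition is precisely the ``final'' averaging step (all remaining summation variables eliminated at once, multiplicative weight replaced by its Euler product), which is why it rather than Proposition~3.9 is invoked here. The Euler factor the paper writes down,
\[
  \frac{1}{p^2}\Bigl(1-\frac1p\Bigr)^2+\frac{1}{p}\Bigl(1-\frac1p\Bigr)\Bigl(2-\frac{2}{p}+\frac{1}{p^2}\Bigr)+\Bigl(1-\frac1p\Bigr)^2,
\]
is exactly your residue-class average regrouped, so the constants agree. Your explicit $h(d,e)$ decomposition (with the key observation $h(p,p)=0$) would also work and has the merit of making the mechanism transparent; the price is the extra bookkeeping you flag at the end, namely controlling the sum-to-integral errors uniformly in $(d,e)$ and then summing against $\lvert h(d,e)\rvert\le 1/(de)$. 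Proposition~4.3 packages that bookkeeping once and for all, which is what the paper buys by citing it.
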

\begin{proof}
  Using the same estimate for the integral over $c$ as in the previous lemma and estimating the integral over $b$ using the height condition $\abs{b^2x}\leq B$, we get the bound
  \[
    \begin{aligned}
      V_2(x,z;B)
      & \ll \int_{1\leq\abs{b}\leq B^{1/2}\abs{x}^{-1/2}} \frac{B^{2/3}}{\abs{b}^{2/3} \abs{x}^{1/3} \abs{z}^{2/3}} \\
      & \ll \frac{B}{\abs{xz}} \left(\frac{B}{\abs{x}^{3} \abs{z}^{2}}\right)^{-1/6}
    \end{aligned}
  \]
  for the volume function $V_2$. Since $V_2(b,z;B)=0$ whenever $\abs{z^2x^3}>B$, we get an asymptotic formula by~\cite[Proposition 4.3]{MR2520770} (with $r=s=1$). We are only left to see that the constant is indeed
  \begin{align*}
    &\prod_p\left(\frac{1}{p^2}\left(1-\frac{1}{p}\right)^2+\frac{1}{p}\left(1-\frac{1}{p}\right)\left(2-\frac{2}{p}+\frac{1}{p^2}\right) + \left(1-\frac{1}{p}\right)^2\right) \\
    & \qquad\qquad = \prod_p \left(1-\frac{2}{p^2}+\frac{1}{p^3}\right) \text{.}\qedhere
  \end{align*}
\end{proof}

\begin{proposition}
  We have
  \[
    N_2(B)=c B \log(B)+ O(B(\log\log B)^2),
  \]
  where
  \[
    c=\frac{20}{3}\prod_p \left(1-\frac{2}{p^2}+\frac{1}{p^3}\right)  \text{.}
  \]
\end{proposition}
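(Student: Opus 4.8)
The plan is to start from the expression for $N_2(B)$ obtained in the previous lemma, namely
\[
  N_2(B)= \frac{1}{2} \prod_p \left(1-\frac{2}{p^2}+\frac{1}{p^3}\right) \int_{\substack{\widetilde H_2(b,c,x,z)\leq B\\\abs{b},\abs{c},\abs{x},\abs{z}\ge 1}}  \frac{1}{\abs{z}} \diff (b,c,x,z) + O(B(\log\log B)^2) \text{,}
\]
and reduce to evaluating the remaining four-dimensional real integral. I would first spell out the height function $\widetilde H_2(b,c,x,z)=H(1,b,c,x,(1+bc)z^{-1},z)$: by~\eqref{eq:heightdesc} it is the maximum of $\abs{x}$, $\abs{b^2x}$, $\abs{c^2x}$, $\abs{z^2x^3}$, and $\abs{b(1+bc)z^{-1}}$, $\abs{c(1+bc)z^{-1}}$ (the term $\abs{ay}$ reduces to $\abs{(1+bc)z^{-1}}$, which is dominated). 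Since we are in the regime $\abs{b},\abs{c},\abs{x},\abs{z}\geq 1$ and $\abs{bc}$ large, $1+bc$ may be replaced by $bc$ up to a negligible error (the boundary region where this replacement matters contributes $O(B)$, by the same type of change-of-variables estimate used in the previous section to pass from $a^2+c$ to $a^2$). Thus the integral becomes, up to $O(B)$,
\[
  \int_{\substack{\abs{x},\abs{b^2x},\abs{c^2x},\abs{z^2x^3},\abs{b^2c z^{-1}},\abs{bc^2 z^{-1}}\leq B\\\abs{b},\abs{c},\abs{x},\abs{z}\geq 1}} \frac{1}{\abs{z}} \diff(b,c,x,z)\text{.}
\]

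Next I would rescale to extract the $B\log B$ behaviour, exactly as in the proof of the proposition in Section~\ref{sec:case-V-b}. Substituting $b\mapsto b z^{1/3}B^{1/3}$ (wait---one must rather mimic the earlier substitution) $x\mapsto x z^{-2/3}B^{1/3}$, $b\mapsto b z^{1/3}B^{1/3}$, $c\mapsto c z^{1/3}B^{1/3}$ so that the monomials $b^2x$, $c^2x$, $z^2x^3$, $b^2cz^{-1}$, $bc^2z^{-1}$ all become homogeneous and bounded by $1$, the Jacobian produces a factor $B$ and the $z$-integral ranges over an interval whose length is logarithmic in $B$; carrying out the $z$-integration gives the leading term $B\log B$ times
\[
  \int_{\substack{\abs{x},\abs{b^2x},\abs{c^2x},\abs{b^2c},\abs{bc^2}\leq 1}} \diff(b,c,x)\text{,}
\]
with the lower-order contributions (the constant multiples of $B$, and the $\log$-correction terms) absorbed into the error via the same bounds $\abs{R_2(B)},\abs{R_3(B)}\ll B$ estimates as before. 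It then remains to check that the value of this three-dimensional integral is $\tfrac{20}{3}$: integrating over $x$ first gives $\int \min\{1,\abs{b}^{-2},\abs{c}^{-2}\}\,\diff(b,c)$ restricted to $\abs{b^2c},\abs{bc^2}\leq 1$, and by the symmetry $b\leftrightarrow c$ and a splitting into the regions $\abs{b}\leq 1$, $\abs{b}>1$ this should reduce to the very integral~\eqref{eq:integral-case-V-b}, whose value $\tfrac{20}{3}$ was already computed. Combining, $N_2(B)=\tfrac12\cdot\tfrac{20}{3}\cdot 2\cdot\prod_p(1-2p^{-2}+p^{-3})\,B\log B+O(B(\log\log B)^2)$, i.e. the claimed constant $c=\tfrac{20}{3}\prod_p(1-2p^{-2}+p^{-3})$.

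The main obstacle I anticipate is not the algebra of the rescaling but two bookkeeping points: first, verifying that the term $\abs{x}$ in the height (coming from $z^2x^3$ together with the constraint $\abs{z}\geq 1$, and from $b^2x$, $c^2x$ with $\abs{b},\abs{c}\geq 1$, so that $\abs{x}$ is always dominated) genuinely does not impose an independent constraint, and second, confirming that the replacement of $1+bc$ by $bc$ and the discarding of the $\abs{a y}=\abs{(1+bc)z^{-1}}$ term both cost only $O(B)$---this requires the same change-of-variables trick ($a'=b^2-B\abs{z}\abs{b}^{-1}$-type substitutions with Jacobian bounded below by a square root) that was used in Section~\ref{sec:case-V-b}, and the exponents must be tracked to land on $O(B)$ (or at worst $O(B\log\log B)$, which is already absorbed). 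Once these reductions are in place, matching the resulting Euclidean integral to~\eqref{eq:integral-case-V-b} and reading off the constant is routine.
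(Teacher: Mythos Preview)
Your approach is essentially the paper's own: replace $1+bc$ by $bc$, drop the redundant height terms, rescale $(b,c,x)$ by suitable powers of $B^{1/3}$ and $z^{1/3}$, and integrate out $z$ to produce the $\log B$. Two small corrections are worth noting.

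First, the two-dimensional integral you are left with after integrating out $x$,
\[
  \int_{\abs{b^2c},\abs{bc^2}\leq 1}\frac{\diff(b,c)}{\max\{1,\abs{b}^2,\abs{c}^2\}}\text{,}
\]
is \emph{not} the integral~\eqref{eq:integral-case-V-b}: the region there is $\abs{a}\leq 1$, $\abs{a^2c}\leq 1$ with integrand $\max\{1,c^2\}^{-1}$, which is a different shape (there $a$ is confined to $[-1,1]$, here $b$ and $c$ are unbounded). The two integrals do both evaluate to $\tfrac{20}{3}$, but this needs a separate short computation (use the $b\leftrightarrow c$ symmetry, then split at $\abs{b}=1$ and integrate). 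The paper carries this out explicitly rather than citing~\eqref{eq:integral-case-V-b}.

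Second, watch the factors of $2$: integrating $x$ over $\abs{x}\leq\min\{1,\abs{b}^{-2},\abs{c}^{-2}\}$ gives $2\min\{\cdots\}$, and integrating $1/\abs{z}$ over $1\leq\abs{z}\leq B^{1/2}\abs{x}^{3/2}$ gives $2\log(B^{1/2}\abs{x}^{3/2})$, whose leading part is $\log B$. So the three-dimensional integral contributes $2\cdot\tfrac{20}{3}$, and together with the prefactor $\tfrac12$ from the previous lemma you land on $\tfrac{20}{3}\prod_p(\cdots)$ as claimed. Your ``$\tfrac12\cdot\tfrac{20}{3}\cdot 2$'' is the right number but the attribution of the factors is slightly off.
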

\begin{proof}
  We have to estimate the integral in Lemma~\ref{lem:N_2-by-volume}. We first want to replace $(1+bc)$ by $bc$ in the height conditions. In the case of the condition $b(1+bc)/z$, this leaves us with an error term that can be bounded by the integral over the region defined by
  $B-\abs{\frac{b}{z}}\leq \abs{\frac{b^2c}{z}}\leq B+\abs{\frac{b}{z}}$, i.e.,
  $\abs{\frac{Bz}{b^2}}-\frac{1}{\abs{b}} \leq \abs{c} \leq \abs{\frac{Bz}{b^2}}+\frac{1}{\abs{b}}$,
  and the remaining height conditions, hence is at most
  \begin{equation}\label{eq:bound-1+bc}
    \ll\int_{\substack{\abs{b^2x},\abs{x^2z^3}\le B\\\abs{b},\abs{z}\geq 1}} \frac{1}{\abs{bz}}\diff b \diff x \diff z. 
  \end{equation}
  Using the condition $\abs{x}^{3/2}\abs{z}^{3/2}\abs{b}\le B$, which is implied as the geometric mean of the others, this error is bounded by
  \[
    \ll
    \int_{\abs{b},\abs{z}\ge 1} \frac{B^{2/3}}{\abs{b}^{5/3}\abs{z}^{2}} \diff b \diff z \ll B^{2/3}\text{.}
  \]
  The condition $c(1+bc)/z$ can be dealt with analogously. Next, we remove the condition $\abs{b}\geq 1$, where we get an error term
  \[
    \begin{aligned}
      & \ll \int_{\substack{\abs{c}\leq\frac{B^{1/2}}{\abs{x^{1/2}}},\abs{x}\leq\frac{B^{1/3}}{\abs{z}^{2/3}}\\ \abs{z}\geq 1 }} \frac{1}{\abs{z}}\diff c \diff x \diff z
        \ll \int_{\substack{\abs{x}\leq\frac{B^{1/3}}{\abs{z}^{2/3}}\\ \abs{z}\geq 1 }} \frac{B^{1/2}}{\abs{x}^{1/2}\abs{z}}\diff x \diff z \\
      & \ll \int_{\abs{z}\geq 1} \frac{B^{2/3}}{\abs{z}^{4/3}}\diff z \ll B^{2/3} \text{,}
    \end{aligned}
  \]
  and subsequently remove $\abs{c}\geq 1$ analogously. Thus, we can estimate the integral in the previous lemma as $V_3(B) + O(B^{2/3})$, where
  \[
    V_3(B)=\int_{\substack{\abs{b^2x},\abs{c^2x},\abs{x^3z^2},\\
    \abs{b^2cz^{-1}},\abs{bc^2z^{-1}} \leq B,\\
    \abs{x},\abs{z}\geq 1 }} \frac{1}{\abs{z}} \diff b \diff c \diff x \diff z \text{.}
  \]
  By a change of variables $b\mapsto B^{-1/3}bz^{-1/3}$, $c \mapsto B^{-1/3}cz^{-1/3}$, $x\mapsto B^{-1/3}xz^{2/3}$, we get
  \begin{equation*}
    V_3(B)
    = B \int_{\substack{\abs{b^2x},\abs{c^2x},\abs{x},\\
      \abs{b^2c},\abs{bc^2} \leq 1,\\
      1\leq \abs{z} \leq B^{1/2}\abs{x}^{3/2} }}
      \frac{1}{\abs{z}} \diff b \diff c \diff x \diff z.
  \end{equation*}
  Integrating over $z$ yields
  \begin{equation}\label{eq:V_a-step-2}
    V_3(B) = 2 B \int_{\substack{\abs{b^2x},\abs{c^2x},\abs{x},\\
      \abs{b^2c},\abs{bc^2} \leq 1}} \log\left(B^{1/2}\abs{x}^{3/2}\right) \diff b \diff c \diff x + O\left(B^{2/3}\right)
  \end{equation}
  since ignoring the bound $1\le B^{1/2}\abs{x}^{3/2}$ introduces an error that is indeed at most
  \[
    \begin{aligned}
      &\ll B \int_{\substack{\abs{x}\leq B^{-1/3} \\ \abs{b^2c},\abs{bc^2}\leq 1 }} \abs{\log(B^{1/2}\abs{x}^{3/2})} \diff b \diff c \diff x 
       \ll B \int_{\abs{b^2c},\abs{bc^2}\leq 1} B^{-1/3} \diff b \diff c \\
      & \ll B^{2/3} \left(\int_{\abs{c}\leq 1} \frac{1}{\sqrt{\abs{c}}} \diff c + \int_{\abs{c}>1} \frac{1}{c^2} \diff c\right) 
      \ll B^{2/3}.
    \end{aligned}
  \]
  Removing $\abs{x}^{3/2}$ from the logarithm in~\eqref{eq:V_a-step-2} results in an error that is bounded by
  \[
    \ll B \int_{\substack{ \abs{b^2c}, \abs{bc^2},\\ \abs{x} \leq 1}} \abs{\log\left(\abs{x}^{3/2}\right)} \diff b \diff c \diff x
    \ll B \int_{\abs{b^2c},\abs{bc^2}\leq 1}\diff b \diff c
    \ll B\text{,}
  \]
  whence
  \[
    V_2(B) = B \log B \int_{\substack{\abs{b^2x},\abs{c^2x},\abs{x},\\
    \abs{b^2c},\abs{bc^2} \leq 1}} \diff b \diff c \diff x + O(B).
  \]
  Integrating over $x$, we get 
  \begin{equation}\label{eq:case-V-a-almost}
    V_2(B)= 2 B\log B \int_{\abs{b^2c},\abs{bc^2} \leq 1} \max\{\abs{b^2},\abs{c^2},1\}^{-1} \diff b \diff c+ O(B)\text{,}
  \end{equation}
  and after computing the integral
  \begin{equation*}
    \begin{aligned}
      \int_{\abs{b^2c},\abs{bc^2} \leq 1} \frac{1}{\max\{\abs{b^2},\abs{c^2},1\}}\diff b \diff c
      & = 2 \int_{\substack{\abs{b}\geq\abs{c}\\ \abs{b^2c}\leq 1}} \frac{1}{\max\{\abs{b^2},1\}} \diff b \diff c \\
      &= 2\int_{b\in\RR} \frac{2\min\{\abs{b},\abs{b^{-2}}\}}{\max\{\abs{b^2},1\}}\diff b \\
      & = \int_{\abs{b}\leq 1} 4\abs{b}\diff b
      + \int_{\abs{b}>1}\frac{4}{\abs{b^4}} \diff b = \frac{20}{3}
    \end{aligned}
  \end{equation*}
  in~\eqref{eq:case-V-a-almost}, we arrive at the desired asymptotic formula.
\end{proof}

\begin{remark}\label{rmk:accumulating-2}
  The strict transform of $V(d)$ (corresponding to points with $x=0$ on the universal torsor) and the exceptional divisor (corrresponding to points with $z=0$) are accumulating. They contain the images of the sets
  \begin{align*}
    &\{(1,b,c,0,1,1+bc)\in\ZZ^6 \mid \abs{b},\abs{c}\le B,\ bc\ne 0\} \subset \mT_2(\ZZ)\quad  \text{and} \\
    &\{(1,1,-1,x,y,0)\in\ZZ^6   \mid \abs{x},\abs{y}\le B,\ \gcd(x,y)=1,\ xy\ne 0\} \subset \mT_2(\ZZ),
  \end{align*}
  respectively. All points in these sets have height at most $B$, and each set contains $\gg B^2$ points.
\end{remark}

\section*{Funding}
This work was supported by the German Academic Exchange Service.

\section*{Acknowledgements}
Parts of this article were prepared at the Institut de Ma\-thé\-ma\-thi\-ques de Jussieu -- Paris Rive Gauche. I wish to thank Antoine Chambert-Loir for his remarks and the institute for its hospitality, as well as the anonymous referee for several useful remarks and suggestions for improvements.

\end{document}